\theoremstyle{plain}
\newtheorem{theorem}{Theorem}[section]
\newtheorem{corollary}[theorem]{Corollary}
\newtheorem{lemma}[theorem]{Lemma}
\newtheorem{proposition}[theorem]{Proposition}
\newtheorem{definition}[theorem]{Definition}
\theoremstyle{remark}
\newcommand{\K}{\mathcal{K}}
 \newcommand{\A}{\mathcal{A}}
\newcommand{\h}{\mathcal{H}}
\begin{document}


\title{Crossed Products by Partial actions of Inverse Semigroups}

\author{S. Moayeri\; Rahni}
\author{B. Tabatabaie\; Shourijeh}
\address{S. Moayeri\; Rahni, Department of Mathematics, College of Sciences, Shiraz University, Shiraz, 71454, Iran \\
              }
\email{smoayeri@shirazu.ac.ir}
\address{B. Tabatabaie\; Shourijeh, Department of Mathematics, College of Sciences, Shiraz University, Shiraz, 71454, Iran 
 }
\email{tabataba@math.susc.ac.ir}

\subjclass[2010]{20M18, 16W22}
\keywords{inverse semigroup, partial action, partial
representation, covariant representation.}
\begin{abstract}
In this work, for a given inverse semigroup we will define the crossed product of an inverse semigroup by a partial action. Also, we will associate to an inverse semigroup $G$ an inverse semigroup $S_G$, and we will prove that there is a correspondence between the covariant representation of $G$ and covariant representation of $S_G$. Finally, we will explore a connection between crossed products of an inverse semigroup actions and crossed products by partial actions of inverse semigroups.
\end{abstract}

\maketitle


\newcommand\sfrac[2]{{#1/#2}}

\newcommand\cont{\operatorname{cont}}
\newcommand\diff{\operatorname{diff}}

\section{Introduction}
The theory of $C^*$-crossed product by group partial actions and inverse semigroup actions are very well developed \cite{partial} \cite{sieben}. In this paper, we show that the theory of crossed products by actions of inverse semigroups can be generalized to partial actions of inverse semigroups.\par In section \ref{two} we define a partial action of an inverse semigroup as a partial homomorphism from the inverse semigroup into a symmetric inverse semigroup on some set. We will refer the reader to \cite{EXPAN} for an extensive treatment of partial actions of inverse semigroups. In section \ref{two}, we define the crossed products by partial actions of inverse semigroups.\par It turns out that there is a close connection between crossed products by partial actions of inverse semigroups and crossed products by inverse semigroups actions. In section \ref{five}, we will show that every crossed products by partial action of an inverse semigroup is isomorphic to a crossed product by an inverse semigroup action.
\section{Partial Actions of Inverse Semigroups and Covariant Representations}\label{two}
We will assume that throughout this work $G$ is a unital inverse semigroup with unit element $e$ and $\A$ is a $C^*$-algebra.

We recall from ~\cite{EXPAN} that a partial action of an inverse semigroup $S$ on a set $X$ is a partial
homomorphism $\alpha :S\mapsto \verb"I"(X)$, that is, for each $s,t\in S$ $$\alpha(s^{*})\alpha(s)\alpha(t)=\alpha(s^{*})\alpha(st),\;\;\;\;\alpha(s)\alpha(t)\alpha(t^{*})=\alpha(st)\alpha(t^{*}),$$
where $\verb"I"(X)$ denotes the inverse semigroup of all partial bijections between subsets of $X$.
But we use ~\cite[Proposition 3.4]{EXPAN} to give a definition of a partial action of an inverse semigroup.
\begin{definition}\label{defpar}
Suppose that $S$ is an inverse semigroup and $X$ is a set. By a partial action of $S$ on
$X$ a map  we mean $\alpha :S\mapsto \verb"I"(X)$ satisfied the following conditions:
\begin{enumerate}
    \item [(\textit{i})]$\alpha_{s}^{-1}=\alpha_{s^{*}}$,
    \item
    [(\textit{ii})]$\alpha _{s}(X_{s^{*}}\cap X_t)=X_s\cap
    X_{st}$ for all $s,t\in S $ (where $X_s $ denotes the rang of $\alpha_{s}$ for each $s\in
    S$),
    \item [(\textit{iii})] $\alpha_s(\alpha_t(x))=\alpha_{st}(x)$ for all $x\in X_{t^*}\cap
    X_{t^*s^*}.$
\end{enumerate}
\end{definition}
To define a partial action $\alpha$ of an inverse semigroup $S$ on an
associative $\K$-algebra $\A$, we suppose in Definition \ref{defpar}
that each $X_s$ $(s\in S)$  is an ideal of $\A$ and that every map
$\alpha_s:X_{s^*}\mapsto X_s$ is an algebra isomorphism.
Furthermore, if the inverse semigroup $S$ is unital with unit $e$, we shall
suppose that $X_e=\A$. The next Proposition shows that for such a partial action $\alpha$ we have $\alpha_e$ is the identity map on $\A$. 
\begin{proposition}
If $\alpha$ is a partial action of $G$ on a $C^*$-algebra $\A$ then $\alpha_e$ is the identity map $\ell$ on $\A$.
\end{proposition}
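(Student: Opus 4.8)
The plan is to derive the statement purely from condition (iii) of Definition \ref{defpar} applied to the pair $s=t=e$, after first disposing of the domain bookkeeping. I would begin by recording two elementary facts. Since $e$ is the unit of the inverse semigroup $G$, it is self-adjoint: the inverse axiom gives $ee^*e=e$, while $e$ acting as a two-sided identity gives $ee^*=e^*$ and $e^*e=e^*$, whence $ee^*e=e^*$ and therefore $e^*=e$. Consequently $X_{e^*}=X_e=\A$ by the standing convention that $X_e=\A$ when the inverse semigroup is unital, and $\alpha_e\colon X_{e^*}\to X_e$ is, by hypothesis, an algebra isomorphism of $\A$ onto $\A$; in particular it is a bijection, and by (i) its inverse is $\alpha_e^{-1}=\alpha_{e^*}=\alpha_e$.

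Next I would compute the domain appearing in (iii) for $s=t=e$: it is $X_{t^*}\cap X_{t^*s^*}=X_e\cap X_{ee}=X_e\cap X_e=\A$. Hence (iii) applies to every $x\in\A$ and yields $\alpha_e(\alpha_e(x))=\alpha_{ee}(x)=\alpha_e(x)$, that is, $\alpha_e\circ\alpha_e=\alpha_e$ as self-maps of $\A$.

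Finally I would cancel one factor of $\alpha_e$: composing the identity $\alpha_e\circ\alpha_e=\alpha_e$ on the left with the genuine inverse $\alpha_e^{-1}$ produces $\alpha_e=\ell$; equivalently, injectivity of $\alpha_e$ applied to $\alpha_e(\alpha_e(x))=\alpha_e(x)$ forces $\alpha_e(x)=x$ for every $x\in\A$. I do not anticipate any real obstacle here: the one point that requires attention is checking that the domain restriction in (iii) collapses to all of $\A$, which is precisely where the unitality hypothesis $X_e=\A$ is used.
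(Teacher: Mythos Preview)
Your proposal is correct and follows essentially the same route as the paper: the paper writes the single chain $\ell=\alpha_e\alpha_e^{-1}=\alpha_e\alpha_{e^*}=\alpha_e\alpha_e=\alpha_e$, invoking condition~(i), the fact $e^*=e$, and condition~(iii) at $s=t=e$---exactly the ingredients you use. Your version is simply more explicit, spelling out why $e^*=e$ and why the domain hypothesis in~(iii) collapses to all of $\A$, points the paper leaves implicit.
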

\begin{proof}
By definition of partial action, $\alpha_e$ is an invertible map on it's domain, $D_e=\A$. Now, $$\ell=\alpha_e\alpha_e^{-1}=\alpha_e\alpha_{e^*}=\alpha_e\alpha_e=\alpha_e.$$
Note that we have used part (3) of Definition \ref{defpar} in the fourth equality above. 
\end{proof}
The following Lemma will be used in the proof of Theorem \ref{theorem1.4}
\begin{lemma}\label{lemma2.5}
If $\alpha$ is a partial action of $G$ on a $C^*$-algebra $\A$, then for all $t,s_1,...,s_n\in G$ $$\alpha_t(D_{t^*}D_{s_1}...D_{s_n})=D_{t}D_{ts_1}...D_{ts_n}.$$
\end{lemma}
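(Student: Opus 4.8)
The plan is to reduce the statement, which is phrased in terms of \emph{products} of ideals, to the corresponding statement about \emph{intersections}, and then to push the intersection through $\alpha_t$ using its injectivity together with condition (\textit{ii}) of Definition \ref{defpar}. Concretely, I would first recall the standard fact that the product of closed two-sided ideals in a $C^*$-algebra coincides with their intersection; iterating this identity gives $D_{t^*}D_{s_1}\cdots D_{s_n}=D_{t^*}\cap D_{s_1}\cap\cdots\cap D_{s_n}$ and likewise $D_tD_{ts_1}\cdots D_{ts_n}=D_t\cap D_{ts_1}\cap\cdots\cap D_{ts_n}$. Hence it suffices to prove
\[
\alpha_t\bigl(D_{t^*}\cap D_{s_1}\cap\cdots\cap D_{s_n}\bigr)=D_t\cap D_{ts_1}\cap\cdots\cap D_{ts_n}.
\]

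The two ingredients are then: (a) since $\alpha_t\colon D_{t^*}\to D_t$ is a bijection, it preserves intersections of subsets of its domain, i.e.\ $\alpha_t\bigl(\bigcap_k A_k\bigr)=\bigcap_k\alpha_t(A_k)$ whenever each $A_k\subseteq D_{t^*}$; and (b) condition (\textit{ii}) of Definition \ref{defpar}, read with $s=t$ and $t=s_k$, which says $\alpha_t(D_{t^*}\cap D_{s_k})=D_t\cap D_{ts_k}$. Combining them: write $D_{t^*}\cap D_{s_1}\cap\cdots\cap D_{s_n}=\bigcap_{k=1}^{n}(D_{t^*}\cap D_{s_k})$, apply (a), then (b) termwise, and intersect over $k$ to obtain $\bigcap_{k=1}^n(D_t\cap D_{ts_k})=D_t\cap D_{ts_1}\cap\cdots\cap D_{ts_n}$. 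One may equally present this as an induction on $n$: the base case $n=1$ is precisely condition (\textit{ii}), and the inductive step intersects one further factor $D_{t^*}\cap D_{s_n}$ and again invokes injectivity of $\alpha_t$ and condition (\textit{ii}).

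The step I expect to require the most care is the identification of the ideal products with intersections: this is exactly the place where one must use that the $D_s$ are \emph{closed} ideals of a $C^*$-algebra, rather than ideals of an arbitrary algebra, and it relies on the existence of bounded approximate units (equivalently, factorization). The only other point worth stating explicitly is that $\alpha_t$ commutes with intersections because it is injective, not merely because it is an algebra homomorphism; once these are in place, the remainder is just a finite bookkeeping of applications of Definition \ref{defpar}(\textit{ii}).
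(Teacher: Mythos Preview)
Your proposal is correct and follows essentially the same approach as the paper: rewrite the products of closed ideals as intersections, use injectivity of $\alpha_t$ to distribute it over the intersection, apply Definition~\ref{defpar}(\textit{ii}) termwise, and recombine. The paper's displayed chain of equalities is precisely this argument (modulo an evident typo in which the symbol $\alpha_t$ is not dropped after condition~(\textit{ii}) has been applied).
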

\begin{proof}
For $t,s_1,...,s_n\in G$ we have
\begin{eqnarray*}
  \alpha_t(D_{t^*}D_{s_1}...D_{s_n}) &=& \alpha_t(D_{t^*}\cap D_{s_1}\cap ...\cap D_{t^*}\cap D_{s_n}) \\
   &=& \alpha_t(D_{t^*}\cap D_{s_1})\cap...\cap\alpha_t(D_{t^*}\cap D_{s_n}) \\
   &=& \alpha_t(D_t\cap D_{ts_1})\cap...\cap\alpha_t(D_t\cap D_{ts_n}) \\
   &=&  \alpha_t(D_t\cap D_{ts_1}\cap...\cap D_t\cap D_{ts_n})\\
   &=& \alpha_t(D_t D_{ts_1}... D_{ts_n})
\end{eqnarray*}

\end{proof}
\begin{theorem}\label{theorem1.4}
If $\alpha$ is a partial action of $G$ on a $C^*$-algebra $\A$, then for $s_1,...,s_n\in G$ the partial automorphism $\alpha_{s_1}...\alpha_{s_n}$ has domain\\ $D_{s_n^*}D{s_n^*s_{n-1}^*}...D_{s_n^*...s_1}$ and range $D_{s_1}...D_{s_1...s_n}$.
\end{theorem}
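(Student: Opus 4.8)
The plan is to argue by induction on $n$, establishing the formula for the range first and then obtaining the formula for the domain by passing to the inverse partial automorphism. For the base case $n=1$ there is nothing to do: by Definition \ref{defpar}(\textit{i}) the map $\alpha_{s_1}$ has domain $D_{s_1^*}$ and range $D_{s_1}$, which matches both asserted formulas.

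For the inductive step I would write $\alpha_{s_1}\alpha_{s_2}\cdots\alpha_{s_n}=\alpha_{s_1}\circ\beta$ with $\beta:=\alpha_{s_2}\cdots\alpha_{s_n}$, and use the elementary fact that a composite of partial bijections $\alpha_{s_1}\circ\beta$ is defined exactly at those $x\in\operatorname{dom}\beta$ with $\beta(x)\in\operatorname{dom}\alpha_{s_1}$, so that its range is $\alpha_{s_1}\bigl(\operatorname{ran}\beta\cap\operatorname{dom}\alpha_{s_1}\bigr)$. Feeding in the induction hypothesis $\operatorname{ran}\beta=D_{s_2}D_{s_2s_3}\cdots D_{s_2\cdots s_n}$ together with $\operatorname{dom}\alpha_{s_1}=D_{s_1^*}$ (again Definition \ref{defpar}(\textit{i})), the range becomes $\alpha_{s_1}\bigl(D_{s_1^*}D_{s_2}D_{s_2s_3}\cdots D_{s_2\cdots s_n}\bigr)$, and then Lemma \ref{lemma2.5}, applied with $t=s_1$ to the list $s_2,\,s_2s_3,\,\dots,\,s_2\cdots s_n$, rewrites this as $D_{s_1}D_{s_1s_2}D_{s_1s_2s_3}\cdots D_{s_1s_2\cdots s_n}$, which is precisely the claimed range.

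For the domain I would use that $(\alpha_{s_1}\alpha_{s_2}\cdots\alpha_{s_n})^{-1}=\alpha_{s_n^*}\alpha_{s_{n-1}^*}\cdots\alpha_{s_1^*}$ by Definition \ref{defpar}(\textit{i}), and that the domain of a partial bijection equals the range of its inverse; hence the domain of $\alpha_{s_1}\cdots\alpha_{s_n}$ is the range of $\alpha_{s_n^*}\cdots\alpha_{s_1^*}$, and applying the range formula already proved to the sequence $s_n^*,s_{n-1}^*,\dots,s_1^*$ gives $D_{s_n^*}D_{s_n^*s_{n-1}^*}\cdots D_{s_n^*\cdots s_1^*}$. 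I expect the main obstacle here to be purely organizational — pinning down the convention for composing partial maps and invoking Lemma \ref{lemma2.5} with exactly the right index list — rather than anything substantive; it is also worth noting that the last subscript in the displayed domain should be read as $s_n^*\cdots s_1^*$.
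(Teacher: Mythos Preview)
Your proof is correct and follows essentially the same strategy as the paper's: induction on $n$ with Lemma \ref{lemma2.5} supplying the inductive step, then duality via $(\alpha_{s_1}\cdots\alpha_{s_n})^{-1}=\alpha_{s_n^*}\cdots\alpha_{s_1^*}$ to handle the other half. The only cosmetic difference is that the paper runs the induction on the domain first (peeling off $\alpha_{s_n}$ from the right) and then deduces the range, whereas you establish the range first (peeling off $\alpha_{s_1}$ from the left) and deduce the domain.
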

\begin{proof}
We will use induction to prove the statement about the domain. For $n=1$
\begin{equation*}
    dom \alpha_{s_1}=ran \alpha_{s^*}=D_{s^*}.
\end{equation*}
Now,
\begin{eqnarray*}
  dom \alpha_{s_1}...\alpha_{s_n} &=& \alpha_{s_n}^{-1}(dom (\alpha_{s_1}...\alpha_{s_{n-1}})\cap ran \alpha_{s_n}) \\
   &=& \alpha_{s_n^*}(D_{s_{n-1}^*}...D_{s_{n-1}^*...s_1^*}\cap D_{s_n}) \\
   &=& \alpha_{s_n^*}(D_{s_n}D_{s_{n-1}^*}...D_{s_{n-1}^*...s_1^*} ) \\
   &=& D_{s_n^*}D{s_n^*s_{n-1}^*}...D_{s_n^*...s_1}.
\end{eqnarray*}
Note that we obtained the last equality by using Lemma \ref{lemma2.5}. For the second statement, we have
\begin{eqnarray*}
  ran \alpha_{s_1}...\alpha_{s_n} &=& dom \alpha_{s_n^*}...\alpha_{s_1^*} \\
  &=& D_{s_1}...D_{s_1...s_n}
\end{eqnarray*}
by the first statement.
\end{proof}
If we consider a group $G$ as an inverse semigroup, then the two definitions of partial actions as a group and as an inverse semigroup are the same. This fact motivates us to define a covariant representation of a partial action of an inverse semigroup.
\begin{definition}\label{def1}
Let $\alpha$ be a partial action of $G$ on an algebra $\A$.  A covariant representation of $\alpha$
is a triple $(\pi, u, \h)$, where $\pi:\A\to B(\h)$ is a non-degenerate representation of $\A$ on a Hilbert space $\h$ and for each $g\in G$, $u_g$ is a partial isometry on $\h$ with initial space $\pi(D_{g^*})\h$ and final space $\pi(D_g)\h$, such that
\begin{enumerate}
  \item $u_g\pi(a)u_{g^*}=\pi(\alpha_g(a))\hspace{0.5cm}a\in D_{g^*}$,
  \item $u_{st}h=u_s u_th\hspace{0.5cm}$ for all $h\in \pi(D_{t^*}D_{t^*s^*})\h$,
  \item $u_{s^*}=u_s^*$.
\end{enumerate}
\end{definition}
Notice that by the \emph{Cohen-Hewitt factorization} Theorem $\pi(D_g)\h$ is a closed subspace of $\h$ and so the notions of initial and final spaces make sense.\par Now, we show that $u_e=1_{\h}$, where $e$ denotes the unit of $G$.
Since $D_e=\A$, by (2) of Definition \ref{def1} for all $h\in \pi(\A)\h=\h$ we have that $$u_eh=u_{ee}h=u_eu_eh.$$ Since $u_e$ is one to one on $\pi(\A)\h=\h$, we have $u_eh=h$ for all $h\in\h$ as we claimed.
\begin{definition}
Let $\alpha$ be a partial action of $G$ on a $C^*$-algebra $\A$. For $s\in G$, let $\rho_s$ denote the central projection of $\A^{**}$ which is the identity of $D_s^{**}$.
\end{definition}
Let ($\pi, u, \h$) be a covariant representation of ($\A, G, \alpha$). Since $\pi$ is a non-degenerate representation of $\A$, $\pi$ can be extended to a normal morphism of $\mathcal{A}^{**}$ onto $\pi(\A)''$. We will denote this extension also by $\pi$. Note that $\pi(D_{s_1}...D_{s_n})\h=\pi(\rho_{s_1}\ldots\rho_{s_n})\h$ for all $s_1,\ldots ,s_n\in G$, and $u_su_{s^*}=\pi(\rho_s)$ for all $s\in G$.
\begin{theorem}\label{theorem2.9}
Let ($\pi, u, \h$) be a covariant representation of ($\A, G, \alpha$). Then for all $s_1,...,s_n\in G$, $u_{s_1}...u_{s_n}$ is a partial isometry with initial space $$\pi(D_{s_n^*}D_{s_n^*s_{n-1}^*}...D_{s_n^*...s_1^*})\h$$ and final space $$\pi(D_{s_1}...D_{s_1...s_n})\h.$$
\end{theorem}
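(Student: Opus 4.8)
The plan is to argue by induction on $n$, in close parallel with the proof of Theorem \ref{theorem1.4}, using the algebraic identities established there together with the covariance axioms of Definition \ref{def1}. For $n=1$ the statement is exactly the requirement in Definition \ref{def1} that $u_{s_1}$ is a partial isometry with initial space $\pi(D_{s_1^*})\h$ and final space $\pi(D_{s_1})\h$. For the inductive step, write $v=u_{s_1}\cdots u_{s_{n-1}}$, which by hypothesis is a partial isometry with initial space $\pi(D_{s_{n-1}^*}\cdots D_{s_{n-1}^*\cdots s_1^*})\h$ and final space $\pi(D_{s_1}\cdots D_{s_1\cdots s_{n-1}})\h$; I want to understand $v\,u_{s_n}$.

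The first key step is to compute the final space of $v\,u_{s_n}$. The product of two partial isometries $v$ and $u_{s_n}$ is again a partial isometry precisely when the final projection of $u_{s_n}$, namely $u_{s_n}u_{s_n^*}=\pi(\rho_{s_n})$, commutes with the initial projection of $v$; since all the $\rho_s$ are central in $\A^{**}$ and $\pi$ extends to a normal morphism of $\A^{**}$, these projections commute, so $v\,u_{s_n}$ is a partial isometry and its final space is $v\big(\pi(\rho_{s_n})\h\big)$ intersected appropriately — more precisely its final projection is $v\,\pi(\rho_{s_n})v^*$. Here I would use the covariance relation (1): conjugating $\pi(\rho_{s_n})$, or rather approximating $\rho_{s_n}$ by elements of $D_{s_n}$ lying in the relevant domains, and invoking $u_g\pi(a)u_{g^*}=\pi(\alpha_g(a))$ together with Lemma \ref{lemma2.5} applied to the composite $\alpha_{s_1}\cdots\alpha_{s_{n-1}}$, to identify $v\,\pi(\rho_{s_n})v^*$ with the projection onto $\pi(D_{s_1}\cdots D_{s_1\cdots s_{n-1}}\cap D_{s_1\cdots s_n})\h = \pi(D_{s_1}\cdots D_{s_1\cdots s_n})\h$. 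This is the same manipulation used for ranges in Theorem \ref{theorem1.4}, transported through $\pi$. The claim about the initial space then follows by applying what has just been proved to the adjoint: by axiom (3), $(u_{s_1}\cdots u_{s_n})^*=u_{s_n^*}\cdots u_{s_1^*}$, whose final space is $\pi(D_{s_n^*}D_{s_n^*s_{n-1}^*}\cdots D_{s_n^*\cdots s_1^*})\h$ by the range computation, and the final space of the adjoint is the initial space of the original operator.

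The main obstacle is the careful handling of domains when transferring the purely algebraic computation of Theorem \ref{theorem1.4} to the Hilbert-space setting: the covariance identity (1) only holds for $a\in D_{g^*}$, and axiom (2) only for $h$ in a specific subspace, so one must verify that the relevant products of partial isometries do agree on the nose on the subspaces in question rather than merely up to domain issues. The cleanest route is to work entirely at the level of projections in $\pi(\A)''$: show $u_{s_1}\cdots u_{s_n}$ is a partial isometry by checking the product of initial/final projections matches, then compute the two projections $(u_{s_1}\cdots u_{s_n})^*(u_{s_1}\cdots u_{s_n})$ and $(u_{s_1}\cdots u_{s_n})(u_{s_1}\cdots u_{s_n})^*$ directly, pushing the identity $\alpha_t(D_{t^*}D_{s_1}\cdots D_{s_n})=D_tD_{ts_1}\cdots D_{ts_n}$ through the normal extension of $\pi$ via $u_t\pi(\rho_{s})u_{t^*}=\pi(\alpha_t(\rho_s))$. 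The centrality of the $\rho_s$ is what makes all the intersections behave like commuting projections, and that is the feature that keeps the induction from breaking down.
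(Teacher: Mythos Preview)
Your proposal is correct and uses the same essential ingredients as the paper: the central projections $\rho_s\in\A^{**}$, the identity $u_su_{s^*}=\pi(\rho_s)$, the covariance axiom, and Lemma~\ref{lemma2.5} transported to the $\rho$'s. The one organizational difference worth noting is how the induction is grouped. You peel off $u_{s_n}$ on the right, writing $v=u_{s_1}\cdots u_{s_{n-1}}$ and computing $v\,\pi(\rho_{s_n})\,v^*$; but the induction hypothesis only gives you the initial and final projections of $v$, not a composite covariance relation $v\,\pi(a)\,v^*=\pi(\alpha_{s_1}\cdots\alpha_{s_{n-1}}(a))$, so you would still have to unwind $v\,\pi(\rho_{s_n})\,v^*$ one $u_{s_i}$ at a time. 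The paper instead peels off $u_{s_1}$ on the left: the induction hypothesis collapses $(u_{s_2}\cdots u_{s_n})(u_{s_2}\cdots u_{s_n})^*$ to $\pi(\rho_{s_2}\cdots\rho_{s_2\cdots s_n})$, then a \emph{single} application of the covariance axiom (after inserting $u_{s_1^*}u_{s_1}=\pi(\rho_{s_1^*})$ to land in the right domain) and Lemma~\ref{lemma2.5} gives $u_{s_1}\pi(\rho_{s_1^*}\rho_{s_2}\cdots\rho_{s_2\cdots s_n})u_{s_1^*}=\pi(\rho_{s_1}\rho_{s_1s_2}\cdots\rho_{s_1\cdots s_n})$. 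Your own ``cleanest route'' paragraph is in fact closer to this; if you reorganize your induction to peel from the left, your argument becomes essentially identical to the paper's.
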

\begin{proof}
Firstly, we show that $u_{s_1}...u_{s_n}u_{s_n}^*...u_{s_1}^*=\pi(\rho_{s_1}...\rho_{s_1...s_n})$. For $n=1$ we have proved that $u_{s_1}u_{s_1}^*=\pi(\rho_{s_1})$. Now,
\begin{eqnarray*}
  u_{s_1}...u_{s_n}u_{s_n}^*...u_{s_1}^* &=& u_{s_1}u_{s_1}^*u_{s_1}\pi(\rho{s_2}...\rho_{s_2...s_n})ua_{s_1}^* \\
   &=& u_{s_1}u_{s_1^*}u_{s_1}\pi(\rho_{s_2}...\rho_{s_2...s_n})u_{s_1}^* \\
   &=& u_{s_1}\pi(\rho_{s_1^*})\pi(\rho_{s_2}...\rho_{s_2...s_n})u_{s_1}^*\\
   &=& u_{s_1}\pi(\rho_{s_1^*}\rho_{s_2}...\rho_{s_2...s_n})u_{s_1^*} \\
   &=& \pi(\alpha_{s_1}(\rho_{s_1^*}\rho_{s_2}...\rho_{s_2...s_n})) \\
   &=& \pi(\rho_{s_1}\rho_{s_1s_2}...\rho_{s_1s_2...s_n}),
\end{eqnarray*}
so, $u_{s_1}...u_{s_n}u_{s_n}^*...u_{s_1}^*$ is a projection since $\rho_{s_1},...,\rho_{s_1...s_n}$ are commute. Finally, the initial space of $u_{s_1}...u_{s_n}$ is equal to
\begin{eqnarray*}
u_{s_1}...u_{s_n}u_{s_n}^*...u_{s_1}^*\h&=&\pi(\rho_{s_1}...\rho_{s_1...s_n})\h  \\
  &=&\pi(D_{s_n^*}D_{s_n^*s_{n-1}^*}...D_{s_n^*...s_1^*})\h.
   \end{eqnarray*}
   Similarly, we can prove that the final space of $u_{s_1}...u_{s_n}$ is equal to $$\pi(D_{s_1}...D_{s_1...s_n})\h.$$
\end{proof}
\begin{corollary}\label{cor2.10}
If ($\pi, u, \h $) is a covariant representation of ($\A, G, \alpha$), then 
\begin{equation*}
u_{s_1...s_n}h=u_{s_1}...u_{s_{n}}h\;\text{ for all}\; h\in\pi(D_{s_n^*}...D_{s_n^*...s_1^*})\h,
\end{equation*}
and
\begin{equation*}
  \pi(a)u_{s_1...s_n}=\pi(a)u_{s_1}...u_{s_n}\;\text{for all}\; a\in D_{s_1}D_{s_1s_2}...D_{s_1...s_n}.
\end{equation*}
\end{corollary}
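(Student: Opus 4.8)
The plan is to derive both identities directly from Theorem \ref{theorem2.9}, which already tells us that $u_{s_1}\cdots u_{s_n}$ is a partial isometry with initial space $\pi(D_{s_n^*}\cdots D_{s_n^*\cdots s_1^*})\h$ and final space $\pi(D_{s_1}\cdots D_{s_1\cdots s_n})\h$. For the first identity, I would first observe that, by condition (2) of Definition \ref{def1}, $u_{s_1\cdots s_n}$ agrees with $u_{s_1}u_{s_2\cdots s_n}$ on an appropriate subspace, and then proceed by induction on $n$: the base case $n=1$ (and $n=2$) is immediate, and the inductive step rewrites $u_{s_1}\cdots u_{s_n}h = u_{s_1}(u_{s_2}\cdots u_{s_n}h)$, applies the induction hypothesis to the inner product (noting that $h$ lies in the relevant domain, by Theorem \ref{theorem2.9} applied to $s_2,\dots,s_n$), and then applies condition (2) once more to collapse $u_{s_1}u_{s_2\cdots s_n}$ to $u_{s_1\cdots s_n}$. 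The key bookkeeping point is to check that $\pi(D_{s_n^*}\cdots D_{s_n^*\cdots s_1^*})\h$ is contained in the domain needed for each application of (2), which follows from Lemma \ref{lemma2.5} together with the identification $\pi(D_{t_1}\cdots D_{t_k})\h = \pi(\rho_{t_1}\cdots\rho_{t_k})\h$ and the fact that the $\rho$'s commute, so intersections of these subspaces behave well.

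For the second identity, I would take adjoints. Write $c = \pi(a)$ with $a\in D_{s_1}D_{s_1s_2}\cdots D_{s_1\cdots s_n}$; then $a^* $ lies in the same ideal, so $\pi(a^*)$ has range inside $\pi(D_{s_1}\cdots D_{s_1\cdots s_n})\h$, which by Theorem \ref{theorem2.9} is exactly the final space of $u_{s_1}\cdots u_{s_n}$ and also of $u_{s_1\cdots s_n}$ (the case $n$ versus the one-term product). Hence for any $h\in\h$, the vector $\pi(a^*)h$ lies in the final space of both partial isometries, so $(u_{s_1}\cdots u_{s_n})^*\pi(a^*)h = (u_{s_1\cdots s_n})^*\pi(a^*)h$ precisely because a partial isometry and its adjoint are determined on the final/initial spaces, and the first identity (applied to $s_n^*,\dots,s_1^*$, whose product is $(s_1\cdots s_n)^*$) gives $u_{s_n^*}\cdots u_{s_1^*}g = u_{(s_1\cdots s_n)^*}g$ for $g$ in the appropriate subspace, which $\pi(a^*)h$ belongs to. Taking adjoints of the resulting operator identity $u_{s_n^*}\cdots u_{s_1^*}\pi(a^*) = u_{(s_1\cdots s_n)^*}\pi(a^*)$ yields $\pi(a)u_{s_1}\cdots u_{s_n} = \pi(a)u_{s_1\cdots s_n}$, using condition (3) of Definition \ref{def1} to pass between $u_{s^*}$ and $u_s^*$.

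I expect the main obstacle to be the first identity's inductive step, specifically verifying the containment of domains at each stage: one must confirm that $h\in\pi(D_{s_n^*}\cdots D_{s_n^*\cdots s_1^*})\h$ implies both $h\in\pi(D_{s_n^*}\cdots D_{s_n^*\cdots s_2^*})\h$ (so the induction hypothesis for $s_2,\dots,s_n$ applies, giving $u_{s_2}\cdots u_{s_n}h = u_{s_2\cdots s_n}h$) and that the resulting vector lies in $\pi(D_{(s_2\cdots s_n)^*}D_{(s_2\cdots s_n)^*s_1^*})\h$, the domain required to apply (2) to the pair $(s_1, s_2\cdots s_n)$. The first containment is just monotonicity of the decreasing family of subspaces obtained by intersecting with more ideals. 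The second is the genuinely delicate computation: $u_{s_2\cdots s_n}h$ lands in the final space of $u_{s_2}\cdots u_{s_n}$, namely $\pi(D_{s_2}\cdots D_{s_2\cdots s_n})\h$, and one checks via Lemma \ref{lemma2.5} (or rather Theorem \ref{theorem1.4} on domains/ranges) and the commuting projections $\rho_s$ that this is contained in $\pi(D_{(s_2\cdots s_n)^*}D_{(s_2\cdots s_n)^*s_1^*})\h$ after restricting $h$ to the smaller subspace $\pi(D_{s_n^*}\cdots D_{s_n^*\cdots s_1^*})\h$. Once this purely order-theoretic/ideal-theoretic bookkeeping is in place, both identities fall out mechanically.
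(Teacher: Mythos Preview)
Your overall strategy is correct, and the second identity via adjoints matches the paper exactly. For the first identity, however, your induction peels from the left (splitting $s_1\cdots s_n$ as $s_1\cdot(s_2\cdots s_n)$), whereas the paper peels from the right: it first applies condition~(2) of Definition~\ref{def1} to write $u_{s_1\cdots s_n}h = u_{s_1\cdots s_{n-1}}u_{s_n}h$, then computes (via the covariance relation and Lemma~\ref{lemma2.5}) that $u_{s_n}h$ lies in $\pi(D_{s_{n-1}^*}\cdots D_{s_{n-1}^*\cdots s_1^*})\h$, so that the induction hypothesis applies to the word $u_{s_1\cdots s_{n-1}}$. Thus in the paper's argument the image computation is the real work, while the application of condition~(2) is immediate.

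Your left-peeling route is also valid, and in fact slightly cleaner, but you have misidentified the delicate step. To collapse $u_{s_1}u_{s_2\cdots s_n}h$ to $u_{s_1\cdots s_n}h$ via condition~(2) with $s = s_1$ and $t = s_2\cdots s_n$, the hypothesis requires that $h$ itself---not $u_{s_2\cdots s_n}h$---lie in $\pi(D_{(s_2\cdots s_n)^*}D_{(s_2\cdots s_n)^*s_1^*})\h = \pi(\rho_{s_n^*\cdots s_2^*}\rho_{s_n^*\cdots s_1^*})\h$. These two projections already occur among the commuting factors $\rho_{s_n^*},\rho_{s_n^*s_{n-1}^*},\ldots,\rho_{s_n^*\cdots s_1^*}$, so the containment is immediate monotonicity, just like your first containment. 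The ``genuinely delicate'' tracking of where $u_{s_2\cdots s_n}h$ lands is therefore unnecessary in your scheme; the left-peeling induction actually bypasses the image computation that the paper must carry out.
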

\begin{proof}
For $n=2$, if $h\in\pi(D_{s_2^*}D_{s_2^*s_1^*})$ then by Definition \ref{def1} part (2) we have $u_{s_1s_2}h=u_{s_1}u_{s_2}$.
For $h\in \pi(D_{s_n^*}...D_{s_n^*...s_1^*})\h$ we have
$u_{s_1...s_n}h=u_{s_1...s_{n-1}}u_{s_n}h$ by Definition \ref{def1} part (2). Now, since 
\begin{eqnarray*}
  u_{s_n}h\in u_{s_n}\pi(\rho_{s_{n}^*}...\rho_{s_n^*...s_1^*})\h &=& \pi(\rho_{s_n^*}\rho_{s_n^*s_{n-1}^*}...\rho_{s_{n-1}^*...s_1^*})\h \\
   &\subseteq& \pi(\rho_{s_{n-1}^*}...\rho_{s_{n-1}^*...s_1^*})\h \\
   &=& \pi(D_{s_{n-1}^*}...D_{s_{n-1}^*...s_1^*})\h,
\end{eqnarray*}
by induction hypothesis we have $u_{s_1...s_{n-1}}u_{s_n}h=u_{s_1}...u_{s_{n-1}}u_{s_n}h$. By the first statement, we have
\begin{equation}\label{eq1}
  u_{s_n^*s_{n-1}^*...s_1^*}\pi(a^*)=u_{s_n^*}...u_{s_1^*}\pi(a^*)
\end{equation}
since $\pi(a^*)\in\pi(D_{s_1}...D_{s_1...s_n})$ for $a\in D_{s_1}...D_{s_1...s_n}$. Taking the conjugate, we have $\pi(a)u_{s_1...s_n}=\pi(a)u_{s_1}...u_{s_n}.$
\end{proof}
\begin{corollary}\label{cor2.11}
If $(\pi, u, \h)$ is a covariant representation of ($\A, G, \alpha$), then $S=\{ u_{s_1}...u_{s_n}\;:\; s_1,..., s_n\in G\}$ is a unital inverse semigroup of partial isometries of $\h$.
\end{corollary}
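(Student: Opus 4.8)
The plan is to verify directly that $S = \{u_{s_1}\cdots u_{s_n} : s_1,\dots,s_n \in G\}$ is closed under multiplication and under the adjoint operation, and then to identify a unit. Closure under multiplication is immediate: the product of $u_{s_1}\cdots u_{s_n}$ and $u_{t_1}\cdots u_{t_m}$ is the word $u_{s_1}\cdots u_{s_n}u_{t_1}\cdots u_{t_m}$, which is again of the required form. Closure under adjoints is also straightforward, using axiom (3) of Definition \ref{def1}: $(u_{s_1}\cdots u_{s_n})^* = u_{s_n}^*\cdots u_{s_1}^* = u_{s_n^*}\cdots u_{s_1^*}$, another element of $S$. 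For the unit, I would take $1_\h = u_e$ (we showed above that $u_e = 1_\h$), which lies in $S$ as the length-one word $u_e$, and it clearly acts as a two-sided identity for composition of operators.

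The substantive point is that $S$ consists of partial isometries and that it is an \emph{inverse} semigroup, i.e.\ that every element $v = u_{s_1}\cdots u_{s_n}$ has $v v^* v = v$ and that $v^*$ is the unique such generalized inverse, together with the fact that idempotents in $S$ commute. Here Theorem \ref{theorem2.9} does the work: it tells us $v$ is a partial isometry and, more importantly, that $v v^* = \pi(\rho_{s_1}\cdots\rho_{s_1\cdots s_n})$, a projection in $\pi(\A)''$. Since all such $\rho$'s are central projections in $\A^{**}$ and $\pi$ is a morphism, any two operators of the form $vv^*$ and $ww^*$ are commuting projections; this gives the commutation of idempotents once one checks that every idempotent of $S$ arises this way (a partial isometry $v$ with $vv^* = v^*v$ — or more simply, noting that a semigroup of partial isometries in which $vv^*$ is always a projection and $v^*v$ is always a projection, with $vv^*v = v$, is regular, and regular plus commuting idempotents is inverse).

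So the key steps, in order, are: (1) record that $S$ is closed under composition and under $*$, using (3) of Definition \ref{def1} for the latter; (2) invoke Theorem \ref{theorem2.9} to see that each $v \in S$ is a partial isometry with $v = vv^*v$ (since $vv^*$ is the projection onto its final space and $v^*v$ the projection onto its initial space), so $S$ is a regular $*$-semigroup with $v^* = $ the partial-isometry adjoint serving as generalized inverse; (3) observe that the idempotents of $S$ are exactly the $\pi(\rho_{s_1}\cdots\rho_{s_1\cdots s_n})\h$-type projections, which commute because the $\rho$'s are central in $\A^{**}$; hence $S$ is an inverse semigroup; (4) note $u_e = 1_\h \in S$ is the unit.

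The main obstacle I anticipate is step (3): pinning down exactly which elements of $S$ are idempotent and showing each is one of the commuting projections coming from Theorem \ref{theorem2.9}. The cleanest route is probably to avoid classifying idempotents abstractly and instead argue that in any $*$-semigroup of bounded operators on $\h$ in which $vv^*$ and $v^*v$ are projections and $vv^*v=v$ for every element, the idempotents are automatically self-adjoint projections of the form $vv^*$, and then commutation is handed to us by the centrality of the $\rho_s$ in $\A^{**}$ as already used in the proof of Theorem \ref{theorem2.9}. A short appeal to the standard fact that a regular semigroup with commuting idempotents is inverse then finishes the argument.
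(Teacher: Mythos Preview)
The paper states this corollary without proof, so there is nothing explicit to compare against; it is evidently meant to follow immediately from Theorem~\ref{theorem2.9}. Your outline is correct and would yield a complete argument. The obstacle you flag in step~(3) dissolves once you recall the standard fact that a contractive idempotent in $B(\h)$ is automatically self-adjoint: every idempotent $v\in S$ is a partial isometry by Theorem~\ref{theorem2.9}, hence has norm at most~$1$, hence is a projection; and then $v=vv^*=\pi(\rho_{s_1}\cdots\rho_{s_1\cdots s_n})$ by the computation in the proof of Theorem~\ref{theorem2.9}. So every idempotent of $S$ already lies among the commuting projections you identified, and your appeal to centrality of the $\rho_s$ goes through.

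In fact you can bypass the centrality argument entirely. Once Theorem~\ref{theorem2.9} guarantees that every element of the $*$-closed semigroup $S$ is a partial isometry, the inverse-semigroup structure is automatic: if $p,q\in S$ are projections then $pq\in S$ is again a partial isometry, so $(pq)(pq)^*(pq)=pq$, which unwinds to $(pq)^2=pq$; thus $pq$ is a contractive idempotent, hence a projection, hence $pq=(pq)^*=qp$. This shows idempotents commute without reference to $\A^{**}$. Either route works; the second is perhaps what the authors had in mind in labeling the result a bare corollary.
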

Now, we are able to define an inverse semigroup associated to a covariant representation of a unital inverse semigroup $G$.
\begin{proposition}\label{pro3.3}
Let $\alpha$ be a partial action of an inverse semigroup $G$ on the $C^*$-algebra $\A$, and let ($\pi, u. \h$) be a covariant representation of $\alpha$. Let $S_G=\{(\alpha_{g_1}...\alpha_{g_n}, u_{g_1}...u_{g_n})\;:\;g_1,...,g_n\in G\}$. Then $S_G$ is a unital inverse semigroup with coordinate wise multiplication. For $s=(\alpha_{g_1}...\alpha_{g_n}, u_{g_1}...u_{g_n})\in S_G$ let
\begin{equation*}
  E_s=D_{g_1}...D_{g_1...g_n},
\end{equation*}
and
\begin{equation*}
  \beta_s=\alpha_{g_1}...\alpha_{g_n}:E_{s^*}\to E_s.
\end{equation*}
Then $\beta$ is an action of $S_G$ on $\A$.
\end{proposition}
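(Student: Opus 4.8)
The plan is to present $S_G$ as an inverse subsemigroup of a product of two inverse semigroups and then to obtain $\beta$ by projecting onto the first coordinate. Recall first that the set $\I(\A)$ of all $*$-isomorphisms between closed two-sided ideals of $\A$, equipped with the usual composition of partial maps (each product taken on its natural domain), is well known to form an inverse semigroup, and that, by Corollary \ref{cor2.11}, $S:=\{\,u_{g_1}\cdots u_{g_n}\;:\;g_1,\dots,g_n\in G\,\}$ is a unital inverse semigroup of partial isometries of $\h$. I would then note that $S_G$ is a subsemigroup of the product inverse semigroup $\I(\A)\times S$: the coordinatewise product of $(\alpha_{g_1}\cdots\alpha_{g_n},\,u_{g_1}\cdots u_{g_n})$ and $(\alpha_{h_1}\cdots\alpha_{h_m},\,u_{h_1}\cdots u_{h_m})$ is $(\alpha_{g_1}\cdots\alpha_{g_n}\alpha_{h_1}\cdots\alpha_{h_m},\,u_{g_1}\cdots u_{g_n}u_{h_1}\cdots u_{h_m})$, again of the prescribed form. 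Moreover $S_G$ is closed under the inverse of $\I(\A)\times S$: using $\alpha_g^{-1}=\alpha_{g^*}$ (property (\textit{i}) of Definition \ref{defpar}) and $u_g^{*}=u_{g^*}$ (part (3) of Definition \ref{def1}), the inverse of $(\alpha_{g_1}\cdots\alpha_{g_n},\,u_{g_1}\cdots u_{g_n})$ is $(\alpha_{g_n^*}\cdots\alpha_{g_1^*},\,u_{g_n^*}\cdots u_{g_1^*})\in S_G$. Since a subsemigroup of an inverse semigroup that is closed under the inverse operation is itself an inverse semigroup (it is regular because $ss^{*}s=s$ already holds in the ambient semigroup, and its idempotents commute because they are idempotents of the ambient semigroup), $S_G$ is an inverse semigroup; its unit is $(\alpha_e,u_e)=(\ell,1_{\h})$, which lies in $S_G$ (take $n=1$, $g_1=e$) and which one checks to be a two-sided identity directly from $\alpha_e=\ell$ and $u_e=1_{\h}$.

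The one genuinely delicate point — and the step I expect to be the main obstacle — is the \emph{well-definedness} of $E_s$ and $\beta_s$. An element $s\in S_G$ will in general admit many factorizations into generators, so a priori the ideal $D_{g_1}\cdots D_{g_1\cdots g_n}$ and the map $\alpha_{g_1}\cdots\alpha_{g_n}$ could depend on the chosen word $g_1,\dots,g_n$ and not merely on $s$. This is exactly what Theorem \ref{theorem1.4} resolves: it identifies $D_{g_1}\cdots D_{g_1\cdots g_n}$ with $\operatorname{ran}(\alpha_{g_1}\cdots\alpha_{g_n})$ and $D_{g_n^*}D_{g_n^*g_{n-1}^*}\cdots D_{g_n^*\cdots g_1^*}$ with $\operatorname{dom}(\alpha_{g_1}\cdots\alpha_{g_n})$, both of which depend only on the first coordinate of $s$, hence only on $s$. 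Consequently $E_s$ and $\beta_s$ are well defined, $E_{s^*}=\operatorname{dom}\beta_s$ and $E_s=\operatorname{ran}\beta_s$; each $E_s$ is a finite intersection of closed two-sided ideals and therefore a closed two-sided ideal; and $\beta_s=\alpha_{g_1}\cdots\alpha_{g_n}$, being a composition of $*$-isomorphisms between ideals, is a $*$-isomorphism of $E_{s^*}$ onto $E_s$.

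It then remains only to verify that $s\mapsto\beta_s$ obeys the axioms of an action of the inverse semigroup $S_G$ on $\A$ in the sense of \cite{sieben}, and each of these is now essentially formal. The homomorphism property $\beta_{st}=\beta_s\beta_t$ is immediate, because the first coordinate of $st$ is by definition $\alpha_{g_1}\cdots\alpha_{g_n}\alpha_{h_1}\cdots\alpha_{h_m}$; the relation $\beta_{s^{*}}=\beta_s^{-1}$ is exactly the identification of coordinates used above; and $\beta_e=\ell$ with $E_e=\A$ (so the action is in particular non-degenerate) follows from the description of the unit of $S_G$. Finally, the compatibility between domains and ranges, $E_{st}=\beta_s(E_{s^{*}}\cap E_t)$, is a formal consequence of $\beta_{st}=\beta_s\beta_t$ together with the fact that composition in $\I(\A)$ is always taken on its natural domain: $E_{st}=\operatorname{ran}(\beta_s\beta_t)=\beta_s\big(\operatorname{ran}\beta_t\cap\operatorname{dom}\beta_s\big)=\beta_s(E_t\cap E_{s^{*}})$. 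The only care required throughout is to read every product $\beta_s\beta_t$ as composition of partial maps with its natural domain — which is precisely how multiplication in $\I(\A)$, hence in $S_G$, was defined — so that the axioms of \cite{sieben} are matched exactly.
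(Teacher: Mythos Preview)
Your argument is correct and, in several respects, cleaner than the paper's own proof. The two proofs differ mainly in packaging. You realize $S_G$ as an inverse subsemigroup of the product $\I(\A)\times S$ (with $S$ the inverse semigroup of Corollary \ref{cor2.11}), which immediately gives closure under multiplication and involution and hence the inverse-semigroup structure; the paper instead verifies closure and the unit directly without naming the ambient product. For the description of $E_s$ and $E_{s^*}$ as the range and domain of $\beta_s$, you simply invoke Theorem \ref{theorem1.4}, whereas the paper essentially reproves that theorem inside the proof by an inductive computation of $\operatorname{dom}(\alpha_{g_1}\cdots\alpha_{g_n})$ and $\operatorname{ran}(\alpha_{g_1}\cdots\alpha_{g_n})$. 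Finally, you make explicit the well-definedness of $E_s$ and $\beta_s$ (independence of the chosen word $g_1,\dots,g_n$), noting that both are determined by the first coordinate of $s$; the paper leaves this implicit. What your approach buys is economy and a clearer separation of concerns; what the paper's approach buys is self-containment, since it does not rely on recognizing $\I(\A)$ as an inverse semigroup or on the subsemigroup criterion.
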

\begin{proof}
Let $s=(\alpha_{g_1}...\alpha_{g_n}, u_{g_1}...u_{g_n}), t=(\alpha_{h_1}...\alpha_{h_n}, u_{h_1}...u_{h_n})$, then $st=(\alpha_{g_1}...\alpha_{g_n}\alpha_{h_1}...\alpha_{h_n}, u_{g_1}...u_{g_n}u_{h_1}...u_{h_n})\in S_G$, and the unit of $S_G$ is $(\alpha_e, u_e)$. Obviously $E_s$ is a closed ideal of $\A$ and $\beta_s$ is an isomorphism. Now, we define the domain of $\beta_s$.
\begin{eqnarray*}
  dom(\alpha_{g_1}...\alpha_{g_n}) &=& \alpha_{g_n^*}(dom(\alpha_{g_1}...\alpha_{g_{n-1}}) D_{g_n}) \\
   &=& \alpha_{g_n^*}(\alpha_{g_{n-1}^*}(dom(\alpha_{g_1}...\alpha_{g_{n-2}})D_{g_{n-1}}) D_{g_n})) \\
   &\vdots&\\
  &=& D_{g_n^*}...D_{g_{n}^*...g_1^*}=E_{s^*}.
\end{eqnarray*}
Now, let us show that $ran\beta_s=E_s$. To do this, we will use induction. For $n=2$,
\begin{eqnarray*}
 ran\beta_s&=&ran \alpha_{g_1}\alpha_{g_2} \\
   &=& \alpha_{g_1}(D_{g_2}D_{g_1^*}) \\
    &=& D_{g_1}D_{g_1g_2}=E_s.
\end{eqnarray*}
On the other hand, for $s=(\alpha_{g_1}...\alpha_{g_n}, u_{g_1}...u_{g_n})$,
\begin{eqnarray*}
  ran\beta_s &=& ran\alpha_{g_1}...\alpha_{g_n} \\
   &=& \alpha_{g_1}(ran(\alpha_{g_2}...\alpha_{g_n})D_{g_{1}^*}) \\
   &=& \alpha_{g_1}(D_{g_2}D_{g_2g_3}...D_{g_2...g_n}D_{g_{1}^*})\\
   &=& D_{g_1}D_{g_1g_2}...D_{g_1...g_n}=E_s.
\end{eqnarray*}
So, $\beta_s:E_{s^*}\to E_s$ is an isomorphism, and clearly for $s,t\in S$ we have $\beta_s\beta_t=\beta_{st}$.
\end{proof}
The following Proposition shows that there exists a relation between covariant representation of $(\A, S_G, \beta)$ and covariant representation of $(\A, G, \alpha).$
\begin{proposition}\label{pro3.5}
keeping the notation of Proposition \ref{pro3.3}, define $\nu:S_G\to B(\h)$ by $\nu_s=u_{g_1}...u_{g_n}$, where
$s=(\alpha_{g_1}...\alpha_{g_n}, u_{g_1}...u_{g_n})$. Then $(\pi, \nu, \h)$ is a covariant representation of $(\A, S_G, \beta)$. Conversely, if ($\rho, z, \K $) is a covariant representation of $(\A , S_G , \beta)$, then the function $\omega : G\to B(\K)$ defined by $\omega_g=z(\alpha_g,u_g)$ gives a covariant representation $(\rho, \omega, \K)$ of $(\A, G, \alpha)$.
\end{proposition}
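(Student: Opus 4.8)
The plan is to verify the two axioms of a covariant representation from Definition \ref{def1} directly, using the correspondence set up in Proposition \ref{pro3.3} together with Theorem \ref{theorem2.9} and Corollary \ref{cor2.10}. I would first treat the forward direction. Given $s=(\alpha_{g_1}\cdots\alpha_{g_n},u_{g_1}\cdots u_{g_n})\in S_G$, the operator $\nu_s=u_{g_1}\cdots u_{g_n}$ is a well-defined partial isometry by Theorem \ref{theorem2.9}, with initial space $\pi(E_{s^*})\h$ and final space $\pi(E_s)\h$; one must first check that $\nu$ is well defined, i.e. that if two products $u_{g_1}\cdots u_{g_n}$ and $u_{h_1}\cdots u_{h_m}$ agree as operators whenever the corresponding products $\alpha_{g_1}\cdots\alpha_{g_n}$ and $\alpha_{h_1}\cdots\alpha_{h_m}$ agree — but in fact the element $s$ of $S_G$ carries both coordinates, so $\nu_s$ is unambiguously the second coordinate and well-definedness is automatic. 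Then the covariance identity $\nu_s\pi(a)\nu_{s^*}=\pi(\beta_s(a))$ for $a\in E_{s^*}$ follows by a short induction on $n$: peel off $u_{g_1}$ on the left and $u_{g_1}^*$ on the right, apply axiom (1) of Definition \ref{def1} after using Corollary \ref{cor2.10} (and the computation $u_{s_1}\cdots u_{s_n}u_{s_n}^*\cdots u_{s_1}^*=\pi(\rho_{s_1}\cdots\rho_{s_1\cdots s_n})$ from the proof of Theorem \ref{theorem2.9}) to match domains, and invoke $\beta_s=\alpha_{g_1}\cdots\alpha_{g_n}$. The multiplicativity condition $\nu_{st}h=\nu_s\nu_t h$ on $\pi(E_{t^*}E_{t^*s^*})\h$ is essentially immediate since in $S_G$ the product $st$ has second coordinate $u_{g_1}\cdots u_{g_n}u_{h_1}\cdots u_{h_m}=\nu_s\nu_t$; one only needs to observe that on the prescribed subspace this product equals $\nu_{st}$ as a partial isometry, which again is forced because the two coordinates of $st$ determine it. Finally $\nu_{s^*}=\nu_s^*$ because $s^*=(\alpha_{g_n^*}\cdots\alpha_{g_1^*},u_{g_n^*}\cdots u_{g_1^*})$ and axiom (3) of Definition \ref{def1} gives $u_{g_n^*}\cdots u_{g_1^*}=(u_{g_1}\cdots u_{g_n})^*$.

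For the converse, suppose $(\rho,z,\K)$ is a covariant representation of $(\A,S_G,\beta)$ and set $\omega_g=z_{(\alpha_g,u_g)}$. Here $(\alpha_g,u_g)$ is indeed an element of $S_G$ (the single-factor case $n=1$), so $\omega_g$ is the partial isometry $z_{(\alpha_g,u_g)}$ with initial space $\rho(E_{(\alpha_g,u_g)^*})\K=\rho(D_{g^*})\K$ and final space $\rho(D_g)\K$, exactly as required by Definition \ref{def1}. Axiom (1) for $\omega$ is axiom (1) for $z$ applied to the element $(\alpha_g,u_g)$, noting $\beta_{(\alpha_g,u_g)}=\alpha_g$. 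Axiom (3) follows since $(\alpha_g,u_g)^*=(\alpha_{g^*},u_{g^*})$ in $S_G$, so $\omega_{g^*}=z_{(\alpha_{g^*},u_{g^*})}=z_{(\alpha_g,u_g)}^*=\omega_g^*$. For axiom (2) I would use that in $S_G$ one has $(\alpha_s,u_s)(\alpha_t,u_t)=(\alpha_s\alpha_t,u_su_t)$, while $(\alpha_{st},u_{st})$ is a possibly different element of $S_G$; the two agree after multiplication by the idempotent corresponding to the ideal $D_{t^*}D_{t^*s^*}$, so applying $z$ and using that $z$ is multiplicative on $S_G$ and respects idempotents (via $z_p z_q = z_{pq}$ and $z_e=1$) yields $\omega_{st}h=\omega_s\omega_t h$ for $h\in\rho(D_{t^*}D_{t^*s^*})\K$. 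One should also record that $(\pi,\nu,\h)$ and $(\rho,\omega,\K)$ use the \emph{same} representation $\pi$ resp.\ $\rho$ of $\A$, so non-degeneracy is inherited without further work.

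The step I expect to be the main obstacle is the bookkeeping around \emph{domains and idempotents} in the multiplicativity axiom — making precise that $(\alpha_s\alpha_t,u_su_t)$ and $(\alpha_{st},u_{st})$ are related in $S_G$ exactly by the idempotent $q$ that is the identity of $D_{t^*}D_{t^*s^*}$ (equivalently, the idempotent $(\ell|_{D_{t^*}D_{t^*s^*}},\,u_{t^*}u_{t^*s^*}u_{s^*s^*t}\cdots)$, whichever normal form one adopts), and then transporting this relation through $z$. This requires knowing how $z$ behaves on the idempotent semilattice of $S_G$, which is where one must appeal to the definition of a covariant representation of an inverse semigroup action (that the $z$-images of idempotents are the projections $\rho(\rho_{\cdot})$, consistent with $z_p z_q = z_{pq}$). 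Once the identifications inside $S_G$ are written down carefully, both directions reduce to routine substitution; the only genuine content is checking initial/final spaces match, which is exactly Theorem \ref{theorem2.9} for $\nu$ and the defining property of $z$ for $\omega$.
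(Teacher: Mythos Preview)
Your proposal is correct and follows essentially the same path as the paper: Theorem~\ref{theorem2.9} for the initial/final spaces of $\nu_s$, an induction peeling off $u_{g_1}$ for the covariance identity, and the observation that $\nu$ is literally the second-coordinate projection so multiplicativity and $\nu_{s^*}=\nu_s^*$ are immediate (in fact $\nu$ is a genuine homomorphism, not just partially multiplicative, since $\beta$ is an action).

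The one place where the paper's argument is cleaner than what you sketch is axiom~(2) in the converse. You correctly flag the bookkeeping with idempotents as the obstacle and propose finding an idempotent $q\in S_G$ with $(\alpha_{st},u_{st})\,q=(\alpha_s\alpha_t,u_su_t)\,q$. The paper sidesteps this: with $s=(\alpha_{g_1g_2},u_{g_1g_2})$ and $s_i=(\alpha_{g_i},u_{g_i})$ it checks \emph{coordinatewise} that
\[
s\,(s_1s_2)^*=s_1s_2\,(s_1s_2)^*
\]
in $S_G$ (the first coordinate because $\alpha_{g_1g_2}$ agrees with $\alpha_{g_1}\alpha_{g_2}$ on $D_{g_2^*}D_{g_2^*g_1^*}=\operatorname{ran}\alpha_{g_2^*}\alpha_{g_1^*}$; the second by Definition~\ref{def1}(2) and Theorem~\ref{theorem2.9}). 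Applying the homomorphism $z$ gives $z_s z_{(s_1s_2)^*}=z_{s_1s_2}z_{(s_1s_2)^*}$, and since the final space of $z_{(s_1s_2)^*}$ is exactly $\rho(D_{g_2^*}D_{g_2^*g_1^*})\K$, this yields $\omega_{g_1g_2}h=\omega_{g_1}\omega_{g_2}h$ on that subspace. So rather than naming the idempotent abstractly, the paper uses the concrete element $(s_1s_2)^*$ itself, which already has the right final space; this is exactly your idea, but packaged so that no separate analysis of $z$ on the idempotent semilattice is needed.
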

\begin{proof}
Let $s=(\alpha_{g_1}...\alpha_{g_n}, u_{g_1}...u_{g_n})\in S$. By Theorem \ref{theorem2.9}, $\nu_s=u_{g_1}...u_{g_n}$ is a partial isometry with initial space $\pi(E_{s^*})\h$ and final space $\pi(E_s)\h$. Obviously, $\nu$ is multiplicative. Let $a\in E_{s^*}= D_{g_n^*}...D_{g_n^*...g_1^*}$, then
\begin{eqnarray*}
  \nu_s\pi(a)\nu_{s^*} &=& u_{g_1}...u_{g_n}\pi(a)u_{g_n^*}...u_{g_1^*} \\
  &=& u_{g_1}...u_{g_{n-1}}\pi(\alpha_{g_n}(a))u_{g_{n-1}^*}...u_{g_1^*}  \\
  &\vdots&\\
   &=& \pi(\alpha_{g_1}...\alpha_{g_n}(a))=\pi(\beta_s(a)).
\end{eqnarray*}
Conversely, suppose that ($\rho, z, \K$) is a covariant representation of $(\A, S_G, \beta)$. We want to show that ($\rho, \omega, \K$) is a covariant representation of ($\A, G, \alpha$). By the definition of $\omega_g$, $g\in G$, $\omega_g$ is a partial isometry with initial space $\pi(D_{g^*})\K$ and final space $\pi(D_g)\K$. For $g_1, g_2\in G$, put $s=(\alpha_{g_1g_2}, u_{g_1g_2})$, $s_1=(\alpha_{g_1}, u_{g_1})$, and
$s_2=(\alpha_{g_2}, u_{g_2})$. By the definition of partial action, if $x\in D_{g_2^*}D_{g_1^*}$ then $\alpha_{g_1}\alpha_{g_2}(x)=\alpha_{g_1g_2}(x)$. But,
\begin{equation*}
  ran\alpha_{g_2^*}\alpha_{g_1^*}=\alpha_{g_2^*}(D_{g_1^*}D_{g_2})=D_{g_2^*}D_{g_2^*g_1^*}.
\end{equation*}
Consequently,
\begin{equation}\label{eq2}
  \alpha_{g_1g_2}(\alpha_{g_1}\alpha_{g_2})^*=\alpha_{g_1g_2}\alpha_{g_2^*}\alpha_{g_1^*}=\alpha_{g_1}\alpha_{g_2}(\alpha_{g_1}\alpha_{g_2}).
\end{equation}
By Definition \ref{def1} part (2), $u_{g_1g_2}=u_{g_1}u_{g_2}$ on $\pi (D_{g_2^*}D_{g_2^*g_1^*})\h$. On the other hand, by Theorem \ref{theorem2.9} final space of $(u_{g_1}u_{g_2})^*$ is $\pi(D_{g_2^*}D_{g_2^*g_1^*})\h$. Thus
\begin{equation}\label{eq3}
  u_{g_1g_2}(u_{g_1}u_{g_2})^*=u_{g_1}u_{g_2}(u_{g_1}u_{g_2})^*.
\end{equation}
Hence
\begin{eqnarray}\label{eq4}
\nonumber  s(s_1s_2)^* &=& (\alpha_{g_1g_2}, u_{g_1g_2})[(\alpha_{g_1}, u_{g_1})(\alpha_{g_{2}}, u_{g_2}))]^* \\
 \nonumber  &=& (\alpha_{g_1}\alpha_{g_2}(\alpha_{g_1}\alpha_{g_2})^*, u_{g_1}u_{g_2}(u_{g_1}u_{g_2})^*) \\
  &=& s_1s_2(s_1s_2)^*,
\end{eqnarray}
note that we have used equations \ref{eq2} and \ref{eq3} in the second equality above. So,
\begin{eqnarray}\label{eq5}
  \nonumber z_sz_{(s_1s_2)^*}&=& z_{s(s_1s_2)^*} \\
  \nonumber &=& z_{s_1s_2(s_1s_2)^*} \\
  &=& z_{s_1s_2}z_{(s_1s_2)^*}
\end{eqnarray}
by equality \ref{eq4}. Now, for $h\in\rho(D_{g_2^*}D_{g_2^*g_1^*})\K$
\begin{eqnarray*}
  z_s h&=&z_{s_1s_2}h  \\
   &=& z_{s_1}z_{s_2}h
\end{eqnarray*}
note that we have used \ref{eq5} and the fact that $\rho(D_{g_2^*}D_{g_2^*g_1^*})\K$ is the final space of $z_{(s_1s_2)^*}$ in the first equality. Hence $\omega_{g_1g_2}=\omega_{g_1}\omega_{g_2}$ on $\rho(D_{g_2^*}D_{g_2^*g_1^*})\K$. For $g\in G$,
\begin{equation*}
  \omega_{g^*}=z_{(\alpha_{g^*}, u_{g^*})}=z_{s^*}=z^*_s=\omega^*_g.
\end{equation*}
Consequently, $(\A, G, \omega)$ is a covariant representation of $(\A, G, \alpha)$.
\end{proof}
\section{Crossed Products}\label{sec3}
Mc Calanahan defines the partial crossed product $\A\ltimes _{\alpha}G$ of the $C^*$-algebra $\A$ and the group $G$ by the partial action $\alpha$ as the enveloping $C^*$-algebra of $L=\{x\in \ell^1(G,\A)\;:\;x(g)\in D_g\}$ with the multiplication and involution
\begin{equation*}
  (x\ast y)(g)=\sum_{h\in G}\alpha_h[\alpha_{h^{-1}}(x(h))y(h^{-1}g)],
\end{equation*}
and
\begin{equation*}
  x^*(g)=\alpha_g(x(g^{-1})^*).
\end{equation*}
He shows that there is a bijective correspondence $(\pi, u, \h)\leftrightarrow (\pi\times u, \h)$ between covariant representations of $(\A, G, \alpha)$ and non-degenerate representations of $\A\ltimes_{\alpha} G$, where $\pi\times u$ defined by $x\mapsto\sum_{g\in G}\pi(x(g))u_g$. We are going to follow his footsteps constructing the crossed product of a $C^*$-algebra and a unital inverse semigroup by a partial action.\par Let $\alpha$ be a partial action of the unital inverse semigroup $G$ on the $C^*$-algebra $\A$. Consider the subset $L=\{x\in\ell^1(G,\A)\;:\;x(g)\in E_g\}$ of $\ell^1(G,\A)$ with the multiplication and involution as follows:
\begin{eqnarray*}
  (x\ast y)(g) &=& \sum_{hk=g}\beta_{h}(\beta_{h^*}(x(h))y(k)), \\
  x^*(g) &=& \beta_s(x(g^*)^*).
\end{eqnarray*}
Notice that by Definition \ref{defpar} part (ii), $(x\ast y)(g)\in E_g$. It is easy to see that for $x,y\in L$ we have $x\ast y, x^*\in L$, and
\begin{equation*}
  \| x\ast y \|\leq \|x\|\|y\|,
\end{equation*}
and
\begin{equation*}
  \|x^*\|=\|x\|.
\end{equation*}
Obviously, $L$ is a closed subset of $\ell^1(G, \A)$, so, $L$ is a Banach space. Easily one can shows that
\begin{itemize}
  \item [(\textit{i})] $(x+y)^*=x^*+ y^*,$
  \item [(\textit{ii})] $(ax)^*=\bar{a}x^*,$
  \item [(\textit{iii})] $(x\ast y)^*=y^*\ast x^*.$
\end{itemize}
\begin{proposition}\label{pro4.1}
$L$ is a Banach *-algebra.
\end{proposition}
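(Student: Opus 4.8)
The plan is to verify the Banach $*$-algebra axioms for $L$ with the convolution product $\ast$ and the involution $x^*(g)=\beta_g(x(g^*)^*)$. Several pieces are already assembled in the excerpt: $L$ is a Banach space (a closed subspace of $\ell^1(G,\A)$), the submultiplicativity $\|x\ast y\|\le\|x\|\,\|y\|$ and isometry $\|x^*\|=\|x\|$ of the involution have been asserted, and the additive/conjugate-linear properties (\textit{i})--(\textit{iii}) of $*$ together with $(x\ast y)^*=y^*\ast x^*$ have been listed. So the only substantive thing left to prove is \emph{associativity} of $\ast$, i.e.\ $(x\ast y)\ast z=x\ast(y\ast z)$ for all $x,y,z\in L$, together with the (routine) bilinearity of $\ast$ and the involutive identity $(x^*)^*=x$.

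For associativity I would write out both sides pointwise at an element $g\in G$. Expanding, $\big((x\ast y)\ast z\big)(g)=\sum_{uk=g}\beta_u\big(\beta_{u^*}((x\ast y)(u))\,z(k)\big)$ and then substituting $(x\ast y)(u)=\sum_{hv=u}\beta_h(\beta_{h^*}(x(h))y(v))$ gives a double sum over factorizations $h v k=g$. On the other side, $\big(x\ast(y\ast z)\big)(g)=\sum_{hm=g}\beta_h\big(\beta_{h^*}(x(h))\,(y\ast z)(m)\big)$ with $(y\ast z)(m)=\sum_{vk=m}\beta_v(\beta_{v^*}(y(v))z(k))$, again a double sum over $hvk=g$. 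The two expressions are then matched term-by-term over the common index set of triples $(h,v,k)$ with $hvk=g$. Reconciling the two terms requires pushing $\beta_h$ and $\beta_{h^*}$ through products and through $\beta_v,\beta_{v^*}$; here the key tools are that each $\beta_s$ is an algebra isomorphism $E_{s^*}\to E_s$, that $\beta_s\beta_t=\beta_{st}$ (Proposition \ref{pro3.3}), and the ideal identity $\alpha_s(X_{s^*}\cap X_t)=X_s\cap X_{st}$ from Definition \ref{defpar}(ii), applied to the $E$-ideals via Theorem \ref{theorem1.4} / Lemma \ref{lemma2.5}, so that all the relevant elements genuinely lie in the domains where the composition rule $\beta_h\beta_v=\beta_{hv}$ is valid.

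The main obstacle is precisely this bookkeeping of domains: in an inverse semigroup $E_{hv}$ is typically a proper ideal, $\beta_h\beta_v$ agrees with $\beta_{hv}$ only on the overlap, and one must check that after applying $\beta_{h^*}$ the element $\beta_{h^*}(x(h))y(v)$ sits in $E_{h^*}E_v$, hence in the domain of $\beta_{hv}\beta_v^{-1}$-type manipulations, so that the rewriting $\beta_h(\beta_{h^*}(x(h))\,\beta_v(\beta_{v^*}(y(v))z(k)))=\beta_{hv}(\cdots)$ is legitimate. This is where the partial-action axioms (ii) and (iii), and the lemmas on domains and ranges of composites, do the real work. Once associativity is established, the remaining axioms are immediate: bilinearity of $\ast$ follows from linearity of each $\beta_s$ and of the finite sums; $(x^*)^*(g)=\beta_g(\beta_{g^*}(x(g))^*{}^*)=\beta_g\beta_{g^*}(x(g))=x(g)$ using $\beta_g\beta_{g^*}=\mathrm{id}$ on $E_g$; and combining these with the already-noted norm inequalities $\|x\ast y\|\le\|x\|\|y\|$, $\|x^*\|=\|x\|$ and completeness of $L$ yields that $L$ is a Banach $*$-algebra.
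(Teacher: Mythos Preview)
Your plan is essentially the paper's own: both argue that everything except associativity is already in hand, and then verify associativity using the partial-action axioms. Your ``term-by-term over triples $(h,v,k)$ with $hvk=g$'' is exactly the reduction to generators $a_r\delta_r$, $a_s\delta_s$, $a_t\delta_t$ that the paper carries out, since the convolution is bilinear and continuous.

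There is, however, one concrete device in the paper's argument that your sketch does not supply. After reducing (with $p=\beta_{r^*}(a_r)$) the left-hand side to $\beta_r\bigl(\beta_s(\beta_{s^*}(pb)\,c)\bigr)$ and the right-hand side to $\beta_r\bigl(p\,\beta_s(\beta_{s^*}(b)\,c)\bigr)$, you need
\[
\beta_s\bigl(\beta_{s^*}(pb)\,c\bigr)=p\,\beta_s\bigl(\beta_{s^*}(b)\,c\bigr).
\]
Here $p\in E_{r^*}$ need not lie in $E_s$, and $c\in E_t$ need not lie in $E_{s^*}$, so neither $\beta_{s^*}$ nor $\beta_s$ can be pushed through these products by bare multiplicativity. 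The paper fixes this by inserting an approximate identity $\{u_\lambda\}$ for $E_{s^*}$: since $\beta_{s^*}(pb)\in E_{s^*}$ one has $\beta_{s^*}(pb)c=\lim_\lambda \beta_{s^*}(pb)\,u_\lambda c$ with $u_\lambda c\in E_{s^*}$, and now multiplicativity of $\beta_s$ applies to give $\lim_\lambda pb\,\beta_s(u_\lambda c)$; the same trick on $\beta_s(\beta_{s^*}(b)c)$ yields the identity. Your phrase ``pushing $\beta_h$ and $\beta_{h^*}$ through products'' is exactly this step, but the domain bookkeeping you mention for $\beta_{hv}=\beta_h\beta_v$ does not by itself cover it. (Also note that the equality $\beta_s\beta_t=\beta_{st}$ you cite from Proposition~\ref{pro3.3} is for the action of $S_G$; for $s,t\in G$ the relevant fact is Definition~\ref{defpar}(iii), which only holds on $E_{t^*}\cap E_{t^*s^*}$, and that is precisely the domain check you must make.)
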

\begin{proof}
By the argument above, $L$ is a Banach space closed under multiplication and involution. To show that $L$ is a Banach *-algebra, it is enough check the associativity of multiplication. It suffices to show this for $x=a_r\delta_r, y=a_s\delta_s,$ and $a_t\delta_t$. Let $\{u_{\lambda}\}$ be an approximate identity for $E_{s^*}$, then
\begin{eqnarray*}
  (a_r\delta_r\ast a_s\delta_s)\ast a_t\delta_t &=& \beta_r(\beta_{r^*}(a_r)a_s)\delta_{rs}\ast a_t\delta_t \\
  &=& \beta_{rs}(\beta_{s^*r^*}(\beta_r(\beta_{r^*}(a_r)a_s))a_t)\delta_{rst} \\
   &=&  \beta_{rs}(\beta_{s^*}\beta_{r^*}(\beta_r(\beta_{r^*}(a_r)a_s))a_t)\delta_{rst} \\
   &=& \beta_{rs}(\beta_{s^*}(\beta_{r^*}(a_r)a_s)a_t)\delta_{rst} \\
   &=&  \lim_{\lambda}\beta_{rs}(\beta_{s^*}(\beta_{r^*}(a_r)a_s)u_{\lambda}a_t)\delta_{rst}\\
   &=&  \lim_{\lambda}\beta_{r}\beta{s}(\beta_{s^*}(\beta_{r^*}(a_r)a_s)a_t)\delta_{rst}\\
   &=&\lim_{\lambda}\beta_r(\beta_{r^*}(a_r)a_s\beta_{s}(u_{\lambda}a_t))\delta_{rst}\\
   &=&\lim_{\lambda}\beta_r(\beta_{r^*}(a_r)\beta_{s}(\beta_{s^*}(a_s)u_{\lambda}a_t))\delta_{rst}\\
   &=&\beta_r(\beta_{r^*}(a_r)\beta_{s}(\beta_{s^*}(a_s)a_t))\delta_{rst}\\
   &=& a_r\delta_r\ast(\beta_{s}(\beta_{s^*}(a_s)a_t)))\delta_{st}\\
   &=& a_r\delta_r\ast(a_s\delta_s\ast a_t\delta_t).
\end{eqnarray*}
\end{proof}
Note that authors in \cite{skew} prove the associativity of $L$ in a general case, where $\A$ is just an algebra.
\begin{definition}\label{def4.2}
If $(\pi, \nu. \h)$ is a covariant representation of ($\A, S, \beta$), then define
  $\pi\times\nu: L\mapsto B(\h)$ by $(\pi\times\nu)(x)=\sum_{s\in S}\pi(x(s))\nu_s.$
\end{definition}
\begin{proposition}\label{pro4.3}
$\pi\times\nu$ is a non-degenerate representation of $L$.
\end{proposition}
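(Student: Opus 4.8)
The plan is to verify the three defining features of a non-degenerate representation in turn: that $\pi\times\nu$ is linear, that it is multiplicative, that it is $*$-preserving, and finally that it is non-degenerate. Linearity is immediate from the definition, since $x\mapsto x(s)$ is linear for each $s$ and $\pi$ is linear. The bulk of the work is multiplicativity and the $*$-property; both reduce, by linearity and density, to checking the identities on elements of the form $x=a_r\delta_r$ with $a_r\in E_r$, exactly as in the proof of Proposition~\ref{pro4.1}.

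For multiplicativity, I would compute $(\pi\times\nu)(a_r\delta_r\ast a_s\delta_s)$ directly. By the convolution formula $a_r\delta_r\ast a_s\delta_s=\beta_r(\beta_{r^*}(a_r)a_s)\delta_{rs}$, so the left-hand side is $\pi\bigl(\beta_r(\beta_{r^*}(a_r)a_s)\bigr)\nu_{rs}$. On the other side, $(\pi\times\nu)(a_r\delta_r)(\pi\times\nu)(a_s\delta_s)=\pi(a_r)\nu_r\pi(a_s)\nu_s$. Now I use the covariance identity (part (1) of the covariant-representation definition for $(\A,S_G,\beta)$, established in Proposition~\ref{pro3.5}): since $\beta_r(\beta_{r^*}(a_r)a_s)\in E_r$, one has $\nu_r\pi(\beta_{r^*}(a_r)a_s)=\pi\bigl(\beta_r(\beta_{r^*}(a_r)a_s)\bigr)\nu_r$, and since $\beta_{r^*}(a_r)\in E_{r^*}$, covariance again gives $\nu_r\pi(\beta_{r^*}(a_r))=\pi(a_r)\nu_r$ after multiplying $\nu_r\pi(\beta_{r^*}(a_r))\nu_{r^*}=\pi(a_r)$ on the right by $\nu_r$ and using that $\nu_{r^*}\nu_r$ acts as the identity on the initial space of $\nu_r$, which is $\pi(E_{r^*})\h$ — and $\pi(a_s)\nu_s$ lands there because $\nu_r^*\nu_r=\pi(\rho_{r^*})$ and $\beta_{r^*}(a_r)a_s\in E_{r^*}$. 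Carefully bookkeeping the projections, the two sides agree, so $(\pi\times\nu)(x\ast y)=(\pi\times\nu)(x)(\pi\times\nu)(y)$; the general case follows by bilinearity and $\ell^1$-continuity (using $\|x\ast y\|\le\|x\|\|y\|$ and boundedness of $\pi\times\nu$).

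For the $*$-property, apply the involution formula $x^*(s)=\beta_s(x(s^*)^*)$: for $x=a_r\delta_r$ one gets $x^*=\beta_r(a_r^*)\delta_{r^*}$ — wait, more precisely $x^*=\beta_{r}(x(r^*)^*)\delta_{?}$, so I must be careful that $a_r\delta_r$ means $x(r)=a_r$ and $x(t)=0$ otherwise, whence $x^*(t)=\beta_t(x(t^*)^*)$ is nonzero only at $t=r^*$, giving $x^*=\beta_{r^*}(a_r^*)\delta_{r^*}$. Then $(\pi\times\nu)(x^*)=\pi(\beta_{r^*}(a_r^*))\nu_{r^*}$, while $((\pi\times\nu)(x))^*=(\pi(a_r)\nu_r)^*=\nu_r^*\pi(a_r^*)=\nu_{r^*}\pi(a_r^*)$; these coincide again by covariance, $\nu_{r^*}\pi(a_r^*)=\pi(\beta_{r^*}(a_r^*))\nu_{r^*}$ (valid since $a_r^*\in E_r=E_{(r^*)^*}$, the domain of $\beta_{r^*}$). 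Finally, for non-degeneracy: $(\pi\times\nu)(L)\h$ contains $\pi(a)\nu_e\h=\pi(a)\h$ for all $a\in E_e=\A$ (using that $\nu_e$ is the identity on $\h$, which was shown just after Definition~\ref{def1} for the $G$-representation and carries over to $S_G$ since its unit is $(\alpha_e,u_e)$), and $\pi(\A)\h$ is dense because $\pi$ is non-degenerate.

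The main obstacle is the projection bookkeeping in the multiplicativity step: one must confirm at each stage that the vectors being moved lie in the appropriate initial/final spaces so that the partial isometries $\nu_r,\nu_{r^*}$ behave like genuine isometries there. This is where Theorem~\ref{theorem2.9} and the identity $\nu_s\nu_{s^*}=\pi(\rho_s)$ do the real work; everything else is routine.
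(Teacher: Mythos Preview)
Your overall strategy matches the paper's: reduce to generators $a_r\delta_r$, push everything through covariance, and handle non-degeneracy at the end. Your $*$-computation is correct, and your non-degeneracy argument (via $\nu_e=1_\h$ and non-degeneracy of $\pi$) is a legitimate and slightly quicker alternative to the paper's approximate-identity route.

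There is, however, one genuine gap in the multiplicativity step. After your covariance manipulations you arrive at $\pi(c)\,\nu_r\nu_s$ with $c=\beta_r(\beta_{r^*}(a_r)a_s)$, whereas the convolution side gives $\pi(c)\,\nu_{rs}$. You wave this away as ``projection bookkeeping'' handled by Theorem~\ref{theorem2.9} and the identity $\nu_s\nu_{s^*}=\pi(\rho_s)$, but those facts do not by themselves yield $\nu_r\nu_s=\nu_{rs}$. In the setting of Proposition~\ref{pro4.3}, $(\pi,\nu)$ is a covariant representation of the \emph{partial} action in the sense of Definition~\ref{def1} (not of the full $S_G$-action of Proposition~\ref{pro3.5}, as your parenthetical suggests), so one only knows $\nu_{rs}h=\nu_r\nu_s h$ for $h\in\pi(E_{s^*}E_{s^*r^*})\h$. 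The step you are missing is the observation that
\[
c=\beta_r(\beta_{r^*}(a_r)a_s)\in\beta_r(E_{r^*}E_s)=E_rE_{rs},
\]
from which Corollary~\ref{cor2.10} (equivalently, part~(2) of Definition~\ref{def1} applied on the adjoint side) gives $\pi(c)\nu_{rs}=\pi(c)\nu_r\nu_s$. The paper makes exactly this point explicit, and it is the substantive hinge of the argument rather than routine bookkeeping.
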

\begin{proof}
clearly $\pi\times\nu$ is a linear map from $L$ into $B(\h)$. As for multiplicativity, it suffices to verify this for elements of the form $a_s\delta_s$. For such elements, we have
\begin{eqnarray*}
  \pi\times\nu(a_s\delta_s\ast a_t\delta_t) &=& \pi\times\nu(\beta_s(\beta_{s^*}(a_s)a_t)\delta_{st}) \\
  &=& \pi(\beta_s(\beta_{s^*}(a_s)a_t)\nu_{st}.
\end{eqnarray*}
We also have
\begin{eqnarray*}
  \pi\times\nu(a_s\delta_s)\pi\times\nu(a_t\delta_t) &=& \pi(a_s)\nu_s\pi(a_t)\nu_t \\
   &=& \nu_s\nu_{s^*}\pi(a_s)\nu_s\pi(a_t)\nu_t \\
  &=& \nu_s\pi(\beta_{s^*}(a_s))\pi(a_t)\nu_t \\
   &=& \nu_s\pi(\beta_{s^*}(a_s)a_t)\nu_t \\
   &=& \nu_s\pi(\beta_{s^*}(a_s)a_t)\nu_{s^*}\nu_s\nu_t \\
   &=& \pi(\beta_s(\beta_{s^*}(a_s)a_t))\nu_s\nu_t\\
\end{eqnarray*}
We have used the fact that $\nu_s\nu_{s^*}\pi(a_s)=\pi(a_s)\nu_{s^*}\nu_s=\pi(a_s)$ for $a\in E_s$ in the second and fifth equalities above. Since $\beta_s(\beta_{s^*}(a_s)a_t)$ is in $\beta_s(E_{s^*}E_{t})=E_sE_{st}$, it follows from Definition \ref{def1} that 
\begin{equation*}
  \pi(\beta_s(\beta_{s^*}(a_s)a_t))\nu_s\nu_t=\pi(\beta_s(\beta_{s^*}(a_s)a_t))\nu_{st},
\end{equation*}
 so, the multiplicativity of $\pi\times\nu$ follows. The following computations verify that $\pi\times\nu$ preserves the $*$-operation.
\begin{eqnarray*}
  \pi\times\nu((a_s\delta_s)^*) &=& \pi\times\nu(\beta_{s^*}(a_s^*)\delta_{s^*}) \\
  &=& \pi(\beta_{s^*}(a_s^*))\nu_{s^*} \\
   &=& \nu_{s^*}\pi(a_{s}^*)\nu_s\nu_{s^*} \\
   &=& \nu_{s^*}\pi(a_s^*) \\
   &=& (\pi(a_s)\nu_s)^*=(\pi\times\nu(a_s\delta_s))^*.
\end{eqnarray*}
If $\{u_{\lambda}\}$ is a bounded approximate identity for $\A$, then $\{u_{\lambda}\delta_e\}$ is a bounded approximate identity for $L$ since for $a\in E_s$ we have
\begin{equation*}
  \lim_{\lambda}u_{\lambda}\delta_e\ast a\delta_s=\lim_{\lambda}u_{\lambda}a\delta_s=a\delta_s,
\end{equation*}
and
\begin{equation*}
  \lim_{\lambda}a\delta_s\ast u_{\lambda}\delta_e=\lim_{\lambda}\beta_s(\beta_{s^*}(a)u_{\lambda})\delta_s=a\delta_s.
\end{equation*}
Since $\pi$ is a non-degenerate representation, $\pi\times\nu(u_{\lambda}\delta_e)=\pi(u_{\lambda})$ converges strongly to $1_{B(\h)}$ and so $\pi\times\nu$ is non-degenerate.
\end{proof}
\begin{definition}
Let $\A$ be a $C^*$-algebra and $\beta$ be a partial action of the unital inverse semigroup $G$ on $\A$. Define a seminorm $\|.\|_1$ on $L$ by
\begin{equation*}
  \|x\|_1=\sup\{\|\pi\times\nu(x)\|\;:\;(\pi,\nu)\textsl{is a covariant representation of } (\A, G, \beta)\},
\end{equation*}
and let $N=\{x\in L\;:\;\|x\|_1=0\}$
\end{definition}
The crossed product $\A\ltimes_{\beta} G$ is the $C^*$-algebra obtained by completing the quotient $\frac{L}{N}$ with respect to $\|.\|_1.$
\begin{lemma}\label{lem4.5}
If $s\leq t$ in $G$, then $\Phi(a\delta_s)=\Phi(a\delta_t)$ for all $a\in E_s$, where $\Phi$ is the quotient map of $L$ onto $\frac{L}{N}$.
\end{lemma}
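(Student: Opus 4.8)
The plan is to reduce the assertion to an operator identity valid in every covariant representation, and then to combine the covariance relations with a compatibility property of $\beta$ that is forced by $s\le t$. First I would reduce. By the definitions of $N$ and $\Phi$, the equality $\Phi(a\delta_s)=\Phi(a\delta_t)$ is equivalent to $\|a\delta_s-a\delta_t\|_1=0$, i.e.\ to $(\pi\times\nu)(a\delta_s)=(\pi\times\nu)(a\delta_t)$ for every covariant representation $(\pi,\nu,\h)$ of $(\A,G,\beta)$; by Definition \ref{def4.2} this says
\[
\pi(a)\nu_s=\pi(a)\nu_t .
\]
So I fix a covariant representation and $a\in E_s$, and aim at this identity.

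The key structural step is to prove that \emph{if $s\le t$ then $E_{s^*}\subseteq E_{t^*}$ (equivalently $E_s\subseteq E_t$) and $\beta_s=\beta_t|_{E_{s^*}}$}. Writing $s=ts^*s$ with $s^*s$ idempotent and applying part (iii) of Definition \ref{defpar} to this factorization, one checks that the domain condition there collapses to $x\in E_{s^*}$ --- using the elementary inclusion $E_{s^*}\subseteq E_{s^*s}$, which follows from part (ii), together with the fact that $\beta_{s^*s}$ is the identity map of $E_{s^*s}$. The conclusion of (iii) then reads $\beta_t(x)=\beta_s(x)$ for all $x\in E_{s^*}$, and in particular $\beta_t$ is defined on $E_{s^*}$, whence $E_{s^*}\subseteq E_{t^*}$. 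Along the way I would record two further facts: $E_s\subseteq E_{ss^*}$ (again from part (ii)), and that for an idempotent $f\in G$ the operator $\nu_f$ is the orthogonal projection onto $\pi(E_f)\h$ --- seen by applying Definition \ref{def1}(2) with $s=t=f$ and using that $\nu_f$ is isometric on its initial space $\pi(E_f)\h=\pi(E_{f^*})\h$.

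Now I would conclude. Since $a\in E_s\subseteq E_t$, set $b=\beta_{s^*}(a)\in E_{s^*}$; by the previous step $b\in E_{t^*}$ and $a=\beta_s(b)=\beta_t(b)$. Covariance (Definition \ref{def1}(1)), together with the fact that $\nu_{g^*}\nu_g$ is the projection onto $\pi(E_{g^*})\h$ and that $\pi(b)\h\subseteq\pi(E_{s^*})\h$, gives
\[
\pi(a)\nu_s=\pi(\beta_s(b))\nu_s=\nu_s\pi(b)\nu_{s^*}\nu_s=\nu_s\pi(b),
\]
and the same computation with $t$ in place of $s$ gives $\pi(a)\nu_t=\nu_t\pi(b)$. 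Moreover $s\le t$ yields $s=(ss^*)t$ with $ss^*$ idempotent and $t^*(ss^*)^*=t^*ss^*=s^*$, so the subspace appearing in Definition \ref{def1}(2) for the product $(ss^*)\cdot t=s$ is $\pi(E_{t^*}E_{s^*})\h=\pi(E_{s^*})\h$ (here using $E_{s^*}\subseteq E_{t^*}$). Since $\pi(b)\h\subseteq\pi(E_{s^*})\h$, this forces $\nu_s\pi(b)=\nu_{ss^*}\nu_t\pi(b)$. Finally, for every $\xi\in\h$ we have $\nu_t\pi(b)\xi=\pi(a)\nu_t\xi\in\pi(E_s)\h\subseteq\pi(E_{ss^*})\h$, and $\nu_{ss^*}$ acts as the identity on $\pi(E_{ss^*})\h$; hence $\nu_s\pi(b)=\nu_t\pi(b)$, and therefore $\pi(a)\nu_s=\pi(a)\nu_t$, as required.

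The step I expect to be the main obstacle is extracting the structural fact of the second paragraph from the particular axiomatization in Definition \ref{defpar}: one has to choose the right factorization of $s$, keep careful track of which intersections of the ideals $E_g$ collapse, and invoke the auxiliary inclusions $E_{s^*}\subseteq E_{s^*s}$ and $E_s\subseteq E_{ss^*}$ and the triviality of $\beta_{s^*s}$. Once the compatibility $\beta_s=\beta_t|_{E_{s^*}}$ and the inclusion $E_s\subseteq E_t$ are available, the remainder is a routine manipulation of the covariance relation and the partial-multiplicativity relation of Definition \ref{def1}.
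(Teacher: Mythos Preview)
Your argument is correct and follows the same strategy as the paper's: reduce to $\pi(a)\nu_s=\pi(a)\nu_t$ in an arbitrary covariant representation, factor $s=ft$ with $f=ss^*$ idempotent, use partial multiplicativity (Definition~\ref{def1}(2)) to insert $\nu_f$, and then exploit that $\nu_f$ is the identity on $\pi(E_f)\h$ together with $E_s\subseteq E_f$. The differences are only in execution. First, the paper cites \cite{EXPAN} for $E_s\subseteq E_t$, while you derive the needed inclusions (and the extra compatibility $\beta_s=\beta_t|_{E_{s^*}}$, which in the end you do not actually need) directly from Definition~\ref{defpar}. Second, the paper concludes by a case split on whether $\nu_t(h)\in\pi(E_f)\h$ or $\nu_t(h)\in(\pi(E_f)\h)^\perp$, whereas you avoid the split by passing through $b=\beta_{s^*}(a)$ and observing that $\nu_t\pi(b)\xi=\pi(a)\nu_t\xi$ always lies in $\pi(E_s)\h\subseteq\pi(E_{ss^*})\h$. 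Your route is slightly longer because of the detour through $b$, but it sidesteps the orthogonal decomposition; the paper's route is more direct since it goes straight from $\pi(a)\nu_{ft}=\pi(a)\nu_f\nu_t$ (an instance of Corollary~\ref{cor2.10}) without introducing an auxiliary element.
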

\begin{proof}
Notice that since $s\leq t$  there is an idempotent $f$ in $G$ such that $s=ft$, and we have $E_s\subseteq E_t$ by \cite[Proposition 3.8]{EXPAN}, so, $a\in E_t$. If ($\pi, \nu$) is a covariant representation of ($\A, G, \beta$), then
\begin{eqnarray*}
  \pi\times\nu(a\delta_s- a\delta_t) &=& \pi(a)\nu_s - \pi(a)\nu_t \\
   &=& \pi(a)\nu_{ft} - \pi(a)\nu_t \\
   &=& \pi(a)\nu_f\nu_t- \pi(a)\nu_t.
\end{eqnarray*}
We have used the fact that for $a\in E_s$ $\pi(a)\nu_{ft}=\pi(a)\nu_f\nu_t$ in the the third equality. Since $f$ is an idempotent, $\nu_f$ is identity on $\pi(E_f)\h$. Now for $h\in \h$ if $\nu_t(h)\in \pi(E_f)\h$, then
\begin{equation*}
  \pi(a)\nu_f\nu_t(h) - \pi(a)\nu_t(h)=0.
\end{equation*}
If $\nu_t(h)\in (\pi(E_f)\h)^\bot= Ker\hspace{0.1cm} \nu_f$, then
\begin{equation*}
  \pi(a)\nu_f\nu_t(h)=0.
\end{equation*}
On the other hand, $\pi(a)\nu_t(h)=0$ because if $k\in\h$ then
\begin{equation*}
  <\pi(a)\nu_t(h), k>= <\nu_t(h), \pi(a^*)k>=0
\end{equation*}
since $a^*\in E_s=E_{ft}\subseteq E_f$. Hence, $\Phi(a\delta_s - a\delta_t)=0$. Note that the fact that $E_{ft}\subseteq E_f$ follows from \cite[Corollary 2.21]{EXPAN} and the fact that
\begin{equation*}
  E_{ft}= ran \beta_{ft}=\beta_f\beta_t=\beta_f(E_tE_f)=E_{ft}\cap E_f.
\end{equation*}

\end{proof}
\begin{corollary}\label{cor4.6}
If $G$ is a semilattice, then $\A\ltimes_\beta G$ is isomorphic to $\A$.
\end{corollary}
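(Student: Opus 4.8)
The plan is to prove that the map $\varphi\colon\A\to\A\ltimes_\beta G$ given by $\varphi(a)=\Phi(a\delta_e)$ (where $\Phi\colon L\to L/N$ is the quotient map of Lemma \ref{lem4.5}, and we regard $L/N$ inside its completion $\A\ltimes_\beta G$) is an isometric $*$-isomorphism. First I would record the two consequences of $G$ being a semilattice that will be used: every $s\in G$ satisfies $s=s^*=s^2$ and $s=se$, so $s\le e$; and a partial action of a semilattice is an action with $\beta_s=\mathrm{id}_{E_s}$ — indeed, putting $t=s$ in Definition \ref{defpar}(\textit{iii}) gives $\beta_s\beta_s=\beta_s$ on $E_s$, and an idempotent algebra isomorphism of $E_s$ is the identity. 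In particular $\beta_e=\ell$, whence $\varphi$ is a $*$-homomorphism, since $a\delta_e\ast b\delta_e=\beta_e(\beta_e(a)b)\delta_e=(ab)\delta_e$ and $(a\delta_e)^*=\beta_e(a^*)\delta_e=a^*\delta_e$.

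Next I would show $\varphi$ has dense range by collapsing every generator onto its $\delta_e$-term. By Lemma \ref{lem4.5}, since $s\le e$ we have $\Phi(a\delta_s)=\Phi(a\delta_e)$ for all $s\in G$ and $a\in E_s$. Given $x\in L$, write $x=\sum_{s\in G}x(s)\delta_s$ with convergence in $\ell^1(G,\A)$; as $\Phi$ is contractive, hence continuous,
\[
\Phi(x)=\sum_{s\in G}\Phi(x(s)\delta_s)=\sum_{s\in G}\Phi(x(s)\delta_e)=\varphi\Big(\sum_{s\in G}x(s)\Big),
\]
the last series converging in $\A$. Thus $\varphi(\A)=\Phi(L)=L/N$, which is dense in $\A\ltimes_\beta G$.

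It then remains to prove $\varphi$ is isometric; together with completeness of $\A$ this forces $\varphi(\A)$ to be closed, hence all of $\A\ltimes_\beta G$, and the proof is done. Since $\nu_e=1_\h$ for any covariant representation, $\|\varphi(a)\|_1=\sup\{\|\pi(a)\|:(\pi,\nu)\text{ a covariant representation of }(\A,G,\beta)\}$, and $\|\varphi(a)\|_1\le\|a\|$ is automatic. For the reverse inequality I would feed a \emph{faithful} non-degenerate representation $\pi$ of $\A$ on $\h$ into a diagonal construction, taking $\nu_s$ to be the orthogonal projection of $\h$ onto $\pi(E_s)\h$ (equivalently $\pi(\rho_s)$ for the normal extension of $\pi$). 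Then $(\pi,\nu)$ is a covariant representation in the sense of Definition \ref{def1}: $\nu_s$ is a self-adjoint partial isometry whose initial and final spaces both equal $\pi(E_s)\h=\pi(E_{s^*})\h$ because $s^*=s$; axiom (3) holds since $\nu_{s^*}=\nu_s=\nu_s^*$; axiom (2) holds because $st\le s$ and $st\le t$ yield $\rho_{st}\le\rho_s\rho_t$ among mutually commuting projections, so $\pi(\rho_{st})=\pi(\rho_s\rho_t)\pi(\rho_{st})$; and axiom (1), $\nu_s\pi(a)\nu_{s^*}=\pi(\beta_s(a))$ for $a\in E_{s^*}$, reduces to $\pi(\rho_s a\rho_s)=\pi(a)=\pi(\beta_s(a))$, which holds since $\rho_s$ is the identity of $E_s^{**}$ and $\beta_s=\mathrm{id}_{E_s}$. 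With this pair, $\|\varphi(a)\|_1\ge\|\pi(a)\|=\|a\|$, so $\varphi$ is isometric.

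The main obstacle is the verification that the diagonal pair $(\pi,\pi\circ\rho)$ is genuinely a covariant representation: this is exactly where the semilattice hypothesis is used, once through $s^*=s$ (to make initial and final spaces agree) and once through $\beta_s=\mathrm{id}_{E_s}$ (to make covariance identity (1) hold). Everything else is a routine combination of Lemma \ref{lem4.5} with the definition of the crossed-product seminorm.
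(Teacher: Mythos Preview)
Your proof is correct and follows the same core strategy as the paper: both use the map $a\mapsto\Phi(a\delta_e)$ together with Lemma~\ref{lem4.5} (in a unital semilattice every $s\le e$, so $\Phi(a\delta_s)=\Phi(a\delta_e)$) to identify $\A$ with the crossed product. The paper proceeds by writing down an explicit inverse $\psi_2\colon\Phi(a\delta_g)\mapsto a$ and arguing well-definedness from the observation that $\pi\times\nu(a\delta_e-b\delta_e)=\pi(a-b)$, concluding $a=b$ ``since $\A$ has a universal representation''; you instead show $\varphi$ is isometric by explicitly constructing a covariant pair $(\pi,\nu)$ with $\pi$ faithful and $\nu_s=\pi(\rho_s)$. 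Your route is arguably more complete: the paper's appeal to the universal representation of $\A$ tacitly presupposes that some faithful $\pi$ can be completed to a covariant representation, which is precisely what your diagonal construction (using $s^*=s$ and $\beta_s=\mathrm{id}_{E_s}$) supplies.
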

\begin{proof}
Let $e$ be the identity element of $G$, then $g\leq e$ for each $g\in G$. Define $\psi_1: \A\to \A\ltimes_\beta G$ by $a\mapsto \Phi(a\delta_e)$. Obviously $\psi_1$ is a $*$-homomorphism. Now, define $\psi_2: \frac{L}{N}\to \A$ by $\Phi(a\delta_g)\mapsto a$. Now, we will show that  $\psi_2$ is well-defined. If $\Phi(a\delta_e)=\Phi(b\delta_e)$, then for each covariant representation ($\pi, \nu, \h$) we have
\begin{equation*}
  \pi\times\nu(a\delta_e - b\delta_e)= \pi(a - b)=0,
\end{equation*}
so, $a - b=0$ since $\A$ has a universal representation. This shows that $\psi_1$ is well-defined  since  $\Phi(a\delta_g)=\Phi(a\delta_e)$ for each $g\in G$. Clearly, $\psi_2$ is a $*$-homomorphism that can be extended to $\A\ltimes_\beta G$. Finally, it is easy to see that $\psi_1\circ\psi_2$ and $\psi_2\circ\psi_1$ are identity maps on $\A\ltimes_\beta G$ and $\A$ respectively.
\end{proof}
\begin{proposition}\label{pro4.7}
Let $(\Pi,\h)$ be a non-degenerate representation of $\A\ltimes_\beta G$. Define a representation $\pi$ of $\A$ on $\h$ and a map $\nu:S\to B(\h)$ by \begin{equation*}
  \pi(a)=\Pi(a\delta_e),\hspace{0.5cm} \nu_s=\lim_\lambda \Pi(u_\lambda\delta_s)\rho_{s^*},
\end{equation*}
where $\{u_\lambda\}$ is an approximate identity of $E_s$, limit is the strong limit, and $\rho_{s^*}$ is the orthogonal projection onto $\pi(E_{s^*})\h$. Then $(\pi, \nu, \h)$ is a covariant representation of $(\A, G, \beta)$.
\end{proposition}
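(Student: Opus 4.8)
The plan is to verify, in turn, that $\pi$ is a non-degenerate $*$-representation of $\A$; that each $\nu_s$ is a well-defined partial isometry with initial space $\pi(E_{s^*})\h$ and final space $\pi(E_s)\h$; and then the three conditions of Definition \ref{def1}, with $\alpha, D$ replaced by $\beta, E$. The first step is short: since $a\mapsto a\delta_e$ is a $*$-homomorphism of $\A$ into $L$ (immediate from the formulas for $\ast$ and the involution, using $\beta_e=\ell$), the map $\pi=\Pi\circ(\,\cdot\,\delta_e)$ is a $*$-representation, and it is non-degenerate because $\{u_\lambda\delta_e\}$ is a bounded approximate identity of $L$ (proof of Proposition \ref{pro4.3}) and $\Pi$ is non-degenerate, so $\pi(u_\lambda)=\Pi(u_\lambda\delta_e)\to 1_{B(\h)}$ strongly; by the Cohen-Hewitt theorem $\pi(E_{s^*})\h$ is closed, so $\rho_{s^*}$ is a well-defined projection.

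The technical core is to show that the strong limit defining $\nu_s$ exists and is independent of the chosen approximate identity. For $b\in E_{s^*}$ and $h\in\h$ the multiplication rule gives $(u_\lambda\delta_s)\ast(b\delta_e)=\beta_s\bigl(\beta_{s^*}(u_\lambda)b\bigr)\delta_s$, and since $\{\beta_{s^*}(u_\lambda)\}$ is an approximate identity of $E_{s^*}$, this converges in $L$ to $\beta_s(b)\delta_s$; hence $\Pi(u_\lambda\delta_s)\pi(b)h\to\Pi(\beta_s(b)\delta_s)h$. As $\operatorname{span}\{\pi(b)h: b\in E_{s^*},\,h\in\h\}$ is dense in $\rho_{s^*}\h$ and $\|\Pi(u_\lambda\delta_s)\|\le\|u_\lambda\|\le1$, the net $\Pi(u_\lambda\delta_s)\rho_{s^*}$ converges strongly on all of $\h$, and the same computation shows the limit is the same for any choice of $\{u_\lambda\}$. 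This yields the working formula $\nu_s\pi(b)h=\Pi(\beta_s(b)\delta_s)h$ for $b\in E_{s^*}$, $h\in\h$, together with $\nu_s=0$ on $(1-\rho_{s^*})\h$; every subsequent identity will be obtained by feeding vectors $\pi(b)h$ into this formula and simplifying with the multiplication rule of $L$ and with Lemma \ref{lem4.5}.

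For condition (3) I would note that $(\beta_s(b)\delta_s)^*=b^*\delta_{s^*}$ for $b\in E_{s^*}$ and run a short inner-product computation — using that $\beta_{s^*}$ is multiplicative and that $\beta_{s^*}\beta_s=\ell$ on $E_{s^*}$ — to obtain $\langle\nu_s\pi(b)h,\pi(c)k\rangle=\langle\pi(b)h,\nu_{s^*}\pi(c)k\rangle$ for $b\in E_{s^*}$, $c\in E_s$; since the ranges of $\nu_s$ and $\nu_{s^*}$ lie in $\pi(E_s)\h$ and $\pi(E_{s^*})\h$, this extends to all of $\h$ and gives $\nu_{s^*}=\nu_s^*$. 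For the partial-isometry claim, the working formula gives $\nu_{s^*}\nu_s\pi(b)h=\Pi(b\delta_{s^*s})h$ for $b\in E_{s^*}$, and since $s^*s\le e$ and $E_{s^*}\subseteq E_{s^*s}$ (Definition \ref{defpar}(ii)), Lemma \ref{lem4.5} collapses this to $\pi(b)h$; hence $\nu_s^*\nu_s=\rho_{s^*}$ and, symmetrically, $\nu_s\nu_s^*=\rho_s$. Condition (1) follows the same pattern one step longer: evaluating $\nu_g\pi(a)\nu_g^*$ on $\pi(c)k$ with $a\in E_{g^*}$, $c\in E_g$ and reducing twice yields $\Pi\bigl(\beta_g(a)c\,\delta_{gg^*}\bigr)k$, which Lemma \ref{lem4.5} turns into $\pi(\beta_g(a))\pi(c)k$ since $gg^*\le e$ and $E_g\subseteq E_{gg^*}$. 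Finally, for condition (2) take $b\in E_{t^*}\cap E_{t^*s^*}$; the decisive point is that Definition \ref{defpar}(ii), together with $tt^*s^*\le s^*$ and \cite[Proposition 3.8]{EXPAN}, forces $\beta_t(b)\in E_{s^*}$, so the working formula applies twice and gives $\nu_s\nu_t\pi(b)k=\Pi\bigl(\beta_s(\beta_t(b))\,\delta_{st}\bigr)k$, while $\nu_{st}\pi(b)k=\Pi(\beta_{st}(b)\,\delta_{st})k$; these agree because $\beta_s(\beta_t(b))=\beta_{st}(b)$ by Definition \ref{defpar}(iii), and one finishes by density.

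The main obstacle is the second paragraph: proving that the defining strong limit exists, checking its independence of the approximate identity, and extracting the clean formula $\nu_s\pi(b)h=\Pi(\beta_s(b)\delta_s)h$. Once that is in hand, the rest is essentially bookkeeping; the only ingredient absent from the group-partial-action case is the family of inverse-semigroup containments $E_{s^*}\subseteq E_{s^*s}$, $E_g\subseteq E_{gg^*}$ and $E_{tt^*s^*}\subseteq E_{s^*}$, each of which follows from Definition \ref{defpar}(ii) and must be invoked precisely where Lemma \ref{lem4.5} or the working formula is applied.
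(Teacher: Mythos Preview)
Your proposal is correct and follows essentially the same route as the paper: both hinge on the formula $\nu_s\pi(b)h=\Pi(\beta_s(b)\delta_s)h$ for $b\in E_{s^*}$ (the paper writes it dually as $\Pi(a_s\delta_s)=\pi(a_s)\nu_s$ for $a_s\in E_s$), and both invoke Lemma~\ref{lem4.5} to collapse $\delta_{ss^*}$ or $\delta_{gg^*}$ to $\delta_e$. The only tactical differences are that the paper obtains $\nu_s^*=\nu_{s^*}$ from the commutation $\Pi(a_s\delta_s)\rho_{s^*}=\rho_s\Pi(a_s\delta_s)$ rather than an inner-product argument, and for condition~(2) it compares $\Pi(a_s\delta_s)\Pi(a_t\delta_t)$ with $\Pi(a_s\delta_s\ast a_t\delta_t)$ using the multiplicativity of $\Pi$ directly; this is a touch cleaner than your ``apply the working formula twice,'' which as written needs an extra Cohen--Hewitt factorization of $\beta_t(b)\in E_t\cap E_{s^*}$ so that $\Pi(\beta_t(b)\delta_t)k$ can be put in the form $\pi(c)h'$ before the second application of the formula for $\nu_s$.
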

\begin{proof}
Clearly $\pi$ is a representation of $\A$ on $\h$. Now, let $\{u_\lambda \}$ be an approximate identity for $E_s$, and let $h\in\h$. We will consider two cases:\\
If $h\in\pi(E_{s^*})\h$: then there exist elements $a\in E_{s^*}$ and $h'\in \h$  such that  $h=\pi(a)h'$. So,
\begin{eqnarray*}
  \nu_s(h) &=& \lim_\lambda\Pi(u_\lambda\delta_s)(\Pi(a\delta_e)h') \\
   &=& \lim_\lambda\Pi(u_\lambda\delta_s\ast a\delta_e)h' \\
   &=&\lim_\lambda \Pi(\beta_s(\beta_{s^*}(u_{\lambda})a)\delta_s)h' \\
  &=& \Pi(\beta_s(a)\delta_s)h'.
\end{eqnarray*}
If $h\in (\pi(E_{s^*})\h)^\bot$: by  the definition we have
 \begin{eqnarray*}
   \nu_s&=& \lim_\lambda\Pi(u_\lambda\delta_s)\rho_{s^*}h=0. \\
 \end{eqnarray*}
 This show that $\nu_s$ is independent of the choice of approximate identity of $E_s$, so $\nu$ is well-defined. Now, we want to show that $\nu^*_s=\nu_{s^*}$ for $s\in S$. First we remark that for $a_s\in E_s$ we have $\Pi(a_s\delta_s)\rho_{s^*}=\rho_s\Pi(a_s\delta_s)$. Let $\{u_\lambda \}$ be an approximate identity for $E_s$. It follows that
 \begin{eqnarray*}
   (\nu_s)^* &=& \lim_\lambda(\Pi(u_\lambda\delta_s)\rho_{s^*})^* \\
    &=& \lim_\lambda\rho_{s^*}\Pi(\beta_{s^*}(u_\lambda)\delta_{s^*}) \\
    &=&\lim_\lambda \Pi(\beta_{s^*}(u_\lambda)\delta_{s^*})\rho_{s}\\
    &=&\nu_{s^*}
 \end{eqnarray*}
 since $\{\beta_{s^*}(u_\lambda)\}$ is an approximate identity for $E_{s^*}$. As for the covariance condition, let $x\in E_{s^*}$ and observe that
 \begin{eqnarray*}
   \nu_s\pi(x)\nu_{s^*} &=& \lim_{\lambda,\mu}\rho_s\Pi(u_\mu\delta_s)\Pi(x\delta_e)\Pi(\beta_{s^*}(u_\lambda)\delta_{s^*})\rho_s \\
    &=& \lim_{\mu,\lambda}\rho_s\Pi(u_\mu\delta_s\ast x\delta_e\ast \beta_{s^*}(u_\lambda)\delta_{s^*}) \rho_s\\
    &=& \lim_{\mu,\lambda}\rho_s\Pi(u_\mu\beta_s(x)u_\lambda\delta_{ss^*})\rho_s \\
    &=& \lim_{\mu,\lambda}\rho_s\Pi(u_\mu\beta_s(x)u_\lambda\delta_{e})\rho_s \\
    &=& \rho_s\pi(\beta_s(x))\rho_s \\
    &=& \pi(\beta_s(x)).
 \end{eqnarray*}
It should be noted that we have used the fact that $\Pi\equiv0$ on $N$ in the forth equality above. As for property (2) of Definition \ref{def1}, notice that for $a_s\in E_s$ we have
 \begin{eqnarray*}
   \Pi(a_s\delta_s) &=& \lim_\lambda\Pi(a_su_\lambda\delta_s) \\
    &=& \lim_\lambda\Pi(a_s\delta_e\ast u_\lambda\delta_s) \\
    &=& \pi(a_s)\rho_s\lim_\lambda\Pi(u_\lambda\delta_s) \\
    &=& \pi(a_s)\lim_\lambda\Pi(u_\lambda\delta_s)\rho_{s^*} \\
    &=& \pi(a_s)\nu_s.
 \end{eqnarray*}
 Thus,
 \begin{eqnarray*}
   \Pi(a_s\delta_s)\Pi(a_t\delta_t) &=& \pi(a_s)\nu_s\pi(a_t)\nu_t \\
    &=& \nu_s\nu_{s^*} \pi(a_s)\nu_s\pi(a_t)\nu_t \\
    &=& \nu_s\pi(\beta_{s^*}(a_s)a_t)\nu_t \\
    &=& \nu_s\pi(\beta_{s^*}(a_s)a_t)\nu_{s^*}\nu_s\nu_t \\
    &=& \pi(\beta_s(\beta_{s^*}(a_s)a_t))\nu_s\nu_t.
 \end{eqnarray*}
 Because $\Pi$ is multiplicative, the above expression is the same as
 \begin{eqnarray*}
   \Pi(a_s\delta_s\ast a_t\delta_t) &=& \Pi(\beta_s(\beta_{s^*}(a_s)a_t)\delta_{st}) \\
   &=& \pi(\beta_s(\beta_{s^*}(a_s)a_t))\nu_{st}.
 \end{eqnarray*}
 Elements of the form $\beta_{s^*}(a_s)a_t$ generate $E_{s^*}E_t$. Since $\beta_s$ maps $E_{s^*}E_t$ onto $E_sE_{st}$, it follows that elements of the form $\beta_s(\beta_{s^*}(a_s)a_t)$ generate $E_sE_{st}$ and so property (2) of Definition \ref{def1} follows. Clearly, $\pi$ is a non-degenerate representation of $\A$. Thus ($\pi, \nu, \h$) is a covariant representation of ($\A, S, \beta$).
 \end{proof}
 \begin{proposition}\label{pro4.8}
 The correspondence $(\pi, \nu , \h)\leftrightarrow (\pi\times\nu, \h)$is a bijection between covariant representations of ($\A, S, \beta$) and non-degenerate representations of $\A\ltimes_{\beta} S$.
 \end{proposition}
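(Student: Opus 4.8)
The plan is to verify that the two constructions already in hand are mutually inverse: the assignment $F\colon(\pi,\nu,\h)\mapsto(\pi\times\nu,\h)$ of Definition~\ref{def4.2}, and the assignment $B\colon(\Pi,\h)\mapsto(\pi,\nu,\h)$ of Proposition~\ref{pro4.7}. First I would record that $F$ really does land in representations of $\A\ltimes_\beta S$ and not merely of $L$: by Proposition~\ref{pro4.3}, $\pi\times\nu$ is a non-degenerate representation of $L$, and since $\|(\pi\times\nu)(x)\|\le\|x\|_1$ for every $x\in L$ directly from the definition of $\|\cdot\|_1$, the map annihilates $N$ and is $\|\cdot\|_1$-contractive, so it extends to a representation of the completion $\A\ltimes_\beta S$; non-degeneracy passes to the completion because the image of $L$ is dense. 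Likewise $B$ is well defined by Proposition~\ref{pro4.7}, whose construction was shown there to be independent of the chosen approximate identities. It then suffices to establish $B\circ F=\mathrm{id}$ and $F\circ B=\mathrm{id}$.

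For $B\circ F=\mathrm{id}$, start with a covariant representation $(\pi,\nu,\h)$, put $\Pi=\pi\times\nu$, and let $(\pi',\nu',\h)=B(\Pi)$. Then $\pi'(a)=\Pi(a\delta_e)=(\pi\times\nu)(a\delta_e)=\pi(a)\nu_e=\pi(a)$, using $\nu_e=1_\h$ (as noted after Definition~\ref{def1}). For $\nu'$, fix $s\in S$ and an approximate identity $\{u_\lambda\}$ of $E_s$; since $\Pi(u_\lambda\delta_s)=\pi(u_\lambda)\nu_s$, the operator $\nu'_s$ is the strong limit of $\pi(u_\lambda)\nu_s\rho_{s^*}$, where $\rho_{s^*}=\nu_s^*\nu_s$ is the source projection of $\nu_s$, so that $\nu_s\rho_{s^*}=\nu_s$. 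Because $E_s$ is a closed ideal of $\A$ and $\pi$ is non-degenerate, $\pi(u_\lambda)$ converges strongly to the orthogonal projection onto $\overline{\pi(E_s)\h}$, and that projection fixes every vector in the range of $\nu_s$; hence $\nu'_s=\nu_s$. Thus $B(F(\pi,\nu,\h))=(\pi,\nu,\h)$.

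For $F\circ B=\mathrm{id}$, start with a non-degenerate representation $(\Pi,\h)$ of $\A\ltimes_\beta S$ and put $(\pi,\nu,\h)=B(\Pi)$. Since $\|x\|_1\le\|x\|_{\ell^1}$, the linear span of the elements $a_s\delta_s$ with $s\in S$ and $a_s\in E_s$ is dense in $\A\ltimes_\beta S$, so it is enough to compare $\pi\times\nu$ with $\Pi$ on such elements. On one side, $(\pi\times\nu)(a_s\delta_s)=\pi(a_s)\nu_s$; on the other, the chain of equalities in the proof of Proposition~\ref{pro4.7} shows exactly that $\Pi(a_s\delta_s)=\pi(a_s)\nu_s$. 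By linearity and $\|\cdot\|_1$-continuity of both maps we conclude $\pi\times\nu=\Pi$, so $F(B(\Pi))=(\Pi,\h)$, and therefore $F$ is a bijection with inverse $B$.

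The step I expect to demand the most care is the equality $\nu'_s=\nu_s$ in the second paragraph. It rests on the standard but not entirely trivial fact that, for a closed ideal $E_s$ of $\A$ with approximate identity $\{u_\lambda\}$ and a non-degenerate representation $\pi$, the net $\pi(u_\lambda)$ converges strongly to the orthogonal projection onto $\overline{\pi(E_s)\h}$ --- this uses that, $E_s$ being an ideal, this projection commutes with $\pi(\A)$ --- together with the observation that composing this strong limit after $\nu_s$ leaves $\nu_s$ unchanged since the range of $\nu_s$ is already contained in $\overline{\pi(E_s)\h}$. Everything else is bookkeeping on top of Propositions~\ref{pro4.3} and~\ref{pro4.7}.
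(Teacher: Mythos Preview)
Your proposal is correct and follows essentially the same route as the paper: both verify that the constructions of Definition~\ref{def4.2}/Proposition~\ref{pro4.3} and of Proposition~\ref{pro4.7} are mutually inverse, checking $B\circ F=\mathrm{id}$ via $\pi'(a)=\Pi(a\delta_e)=\pi(a)$ and $\nu'_s=\lim_\lambda\pi(u_\lambda)\nu_s=\nu_s$, and $F\circ B=\mathrm{id}$ by invoking the identity $\Pi(a_s\delta_s)=\pi(a_s)\nu_s$ established inside the proof of Proposition~\ref{pro4.7}. Your write-up is in fact slightly more careful than the paper's in two places: you explain why $\pi\times\nu$ descends from $L$ to $\A\ltimes_\beta S$, and you justify the limit $\pi(u_\lambda)\nu_s\to\nu_s$ by identifying $\mathrm{s}\text{-}\lim\,\pi(u_\lambda)$ as the range projection of $\nu_s$.
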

 \begin{proof}
 We will show that the correspondences $(\pi, \nu, \h)\mapsto(\pi\times \nu, \h)$ and $(\Pi,\h)\mapsto(\pi, \nu, \h)$ are inverses of each other. Let ($\pi', \nu', \h$) be a covariant representation of ($\A, S, \beta$). Let ($\pi, u, \h$) be a covariant representation of ($\A, S, \beta$) induced by $\pi'\times\nu'$. Then for $a\in\A$ and $s\in S$ we have
 \begin{equation*}
   \pi(a)=\pi'\times\nu'(a\delta_e)=\pi'(a)
 \end{equation*}
 and
 \begin{eqnarray*}
   u_s&=&\lim_\lambda\rho_s\pi'\times\nu'(\omega_\lambda\delta_s)\\
   &=&\lim_\lambda\rho_s\pi'(\omega_\lambda)\nu'_s\\
   &=&\lim_\lambda\pi'(\omega_\lambda)\nu'_s=\nu'_s.
 \end{eqnarray*}
 We have used the fact that $\rho_s\pi'(\omega_\lambda)\nu'_s=\pi'(\omega_\lambda)\nu'_s$ since $\rho_s$ is the orthogonal projection onto $\pi'\times\nu'(E_s)\h=\bar{\textsf{Span}}\{\pi'(a_s)\nu'_s\;:\;a_s\in E_s\}$. Let $\Pi$ be a non-degenerate representation of $\A\ltimes_\beta S$ on $\h$. Let $(\pi, \nu, \h)$ be a covariant representation of ($\A, S, \beta$) induced by $\Pi$. Then if $a_s\in E_s$ we have
 \begin{eqnarray*}
   \pi\times\nu(a_s\delta_s) &=& \pi(a_s)\nu_s \\
    &=& \Pi(a_s\delta_e)\lim_\lambda\Pi(u_\lambda\delta_s)\rho_{s^*} \\
    &=& \Pi(a_s\delta_e)\rho_s\lim_\lambda\Pi(u_\lambda\delta_s) \\
    &=& \Pi(a_s\delta_e)\lim_\lambda\Pi(u_\lambda\delta_s)\\
   &=&\lim_\lambda\Pi(a_su_\lambda\delta_s)=\Pi(a_s\delta_s).
 \end{eqnarray*}
 Thus the correspondence is bijective.
  \end{proof}
 \section{Conection Between Croossed Products}\label{five}
 Throughout this section we will assume that $G$ is an inverse semigroup with unit element $e$.
 \begin{lemma}\label{lem5.1}
 Let ($\A, G, \alpha$) and ($\A, S_G, \beta$) be as in Proposition \ref{pro3.3}. Let ($\rho, z, \K$) be a covariant representation of ($\A, S_G, \beta$), and define a covariant representation ($\rho, \omega, \K$) of ($\A, G, \alpha$) by $\omega_g=z(\alpha_g, u_g)$ as in Proposition \ref{pro3.5}. Then $(\rho\times z)(\A\ltimes_\beta S)= (\rho\times\omega)(\A\times_\alpha G)$.
 \end{lemma}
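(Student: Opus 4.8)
The plan is to reduce both $C^*$-algebras to closed linear spans of elementary operators and then match those spans. First I note that, since $\rho\times z$ and $\rho\times\omega$ are $*$-homomorphisms of the $C^*$-algebras $\A\ltimes_\beta S$ and $\A\ltimes_\alpha G$, their images are norm-closed $*$-subalgebras of $B(\K)$. Moreover the elements $a\delta_s$ with $s\in S_G$ and $a\in E_s$ span a dense subspace of the $\ell^1$-algebra $L$ from which $\A\ltimes_\beta S$ is built, and similarly the $a\delta_g$ with $g\in G$, $a\in D_g$ span a dense subspace of the corresponding algebra for the partial action $\alpha$ of $G$; consequently $(\rho\times z)(\A\ltimes_\beta S)$ is the closed linear span of the operators $\rho(a)z_s$ ($s\in S_G$, $a\in E_s$), and $(\rho\times\omega)(\A\ltimes_\alpha G)$ is the closed linear span of the operators $\rho(a)\omega_g$ ($g\in G$, $a\in D_g$). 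It therefore suffices to check that each generator of either algebra lies in the other.

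One inclusion is immediate. Given $g\in G$ and $a\in D_g$, the pair $(\alpha_g,u_g)$ is an element of $S_G$ (the length-one case of Proposition \ref{pro3.3}) with $E_{(\alpha_g,u_g)}=D_g$, and $\omega_g=z_{(\alpha_g,u_g)}$ by the definition of $\omega$ in Proposition \ref{pro3.5}. Hence $\rho(a)\omega_g=(\rho\times z)(a\delta_{(\alpha_g,u_g)})\in(\rho\times z)(\A\ltimes_\beta S)$, and taking closed linear spans gives $(\rho\times\omega)(\A\ltimes_\alpha G)\subseteq(\rho\times z)(\A\ltimes_\beta S)$.

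For the reverse inclusion, fix a generator $\rho(a)z_s$, where $s=(\alpha_{g_1}\cdots\alpha_{g_n},\,u_{g_1}\cdots u_{g_n})\in S_G$ and $a\in E_s=D_{g_1}D_{g_1g_2}\cdots D_{g_1\cdots g_n}$. Set $s_i=(\alpha_{g_i},u_{g_i})\in S_G$, so that $s=s_1\cdots s_n$ and $E_{s_1\cdots s_k}=D_{g_1}D_{g_1g_2}\cdots D_{g_1\cdots g_k}$ for each $k\le n$; these ideals are nested with smallest term $E_s$, so $E_{s_1}E_{s_1s_2}\cdots E_{s_1\cdots s_n}=E_s$ and in particular $a$ lies in this product. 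Applying the second assertion of Corollary \ref{cor2.10} to the covariant representation $(\rho,z,\K)$ of $(\A,S_G,\beta)$ yields $\rho(a)z_s=\rho(a)z_{s_1}\cdots z_{s_n}$, and since $z_{s_i}=\omega_{g_i}$ this is $\rho(a)\omega_{g_1}\cdots\omega_{g_n}$. Now apply Corollary \ref{cor2.10} once more, this time to the covariant representation $(\rho,\omega,\K)$ of the partial action $\alpha$ of $G$: as $a\in D_{g_1}D_{g_1g_2}\cdots D_{g_1\cdots g_n}$, we get $\rho(a)\omega_{g_1}\cdots\omega_{g_n}=\rho(a)\omega_{g_1\cdots g_n}$. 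Putting $g=g_1\cdots g_n\in G$ we have $a\in D_g$, whence $\rho(a)z_s=\rho(a)\omega_g=(\rho\times\omega)(a\delta_g)\in(\rho\times\omega)(\A\ltimes_\alpha G)$. Taking closed linear spans gives $(\rho\times z)(\A\ltimes_\beta S)\subseteq(\rho\times\omega)(\A\ltimes_\alpha G)$, and together with the previous inclusion this proves the equality.

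The step I expect to need the most care is the first application of Corollary \ref{cor2.10}, made to the inverse semigroup action $\beta$ of $S_G$ rather than to a partial action of $G$: one has to be sure that a covariant representation of $(\A,S_G,\beta)$ satisfies exactly the axioms of Definition \ref{def1}, so that Theorem \ref{theorem2.9} and Corollary \ref{cor2.10} go through verbatim with $G$, $\alpha$, $D$ replaced by $S_G$, $\beta$, $E$. This is precisely the framework already established in Proposition \ref{pro3.5}, so nothing new is needed there. The remaining points — that $*$-homomorphic images of $C^*$-algebras are norm-closed, and that the finitely supported functions $a\delta_s$ with $a\in E_s$ are dense in $L$ — are routine, as is the observation that the hypotheses of the two invocations of Corollary \ref{cor2.10} coincide exactly with the condition $a\in E_s$.
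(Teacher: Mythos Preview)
Your proof is correct and follows essentially the same route as the paper: both arguments identify the images as closed spans of the generators $\rho(a)\omega_g$ and $\rho(a)z_s$, use the length-one embedding $g\mapsto(\alpha_g,u_g)$ for one inclusion, and invoke Corollary~\ref{cor2.10} to factor $z_s=z_{s_1}\cdots z_{s_n}=\omega_{g_1}\cdots\omega_{g_n}$ for the other. Your version is in fact slightly more explicit, since you spell out the second application of Corollary~\ref{cor2.10} (to $(\rho,\omega,\K)$) that collapses $\rho(a)\omega_{g_1}\cdots\omega_{g_n}$ to $\rho(a)\omega_{g_1\cdots g_n}$, a step the paper leaves implicit.
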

\begin{proof}
For $g\in G$, let $s=(\alpha_g, u_g)\in S$, then $E_s=D_g$, so, $\rho(D_g)\omega_g=\rho(E_s)z_s$. Thus,
\begin{equation}\label{eq}
  \sum_{g\in G} \rho(D_g)\omega_g\subseteq\sum_{s\in S}\rho(E_s)z_s.
\end{equation}
On the other hand, if 
\begin{equation*}
  s=(\alpha_{g_1}...\alpha_{g_n}, u_{g_1}...u_{g_n})\;\text{and}\;a\in E_s=D_{g_1}D_{g_1g_2}...D_{g_1...g_n}
\end{equation*}
then by Corollary \ref{cor2.10} we have
\begin{eqnarray}\label{eqn}
  \nonumber \rho(a)z_s&=&\rho(a)z_{(\alpha_{g_1}, u_{g_1})...(\alpha_{g_n}, u_{g_n})}  \\
  \nonumber &=& \rho(a)z_{(\alpha_{g_1}, u_{g_1})}...z_{(\alpha_{g_n}, u_{g_n})}\\
 &=& \rho(a)\omega_{g_1}...\omega_{g_n}.
\end{eqnarray}
Let $\Phi(\sum a_g\delta_g)\in\frac{L}{N}$. Then by \ref{eq} we have
\begin{equation*}
  \rho\times\omega(\Phi(\sum a_g\delta_g))=\sum\rho(a_g)\omega_g\subseteq (\rho\times z)(\A\ltimes_\beta S),
\end{equation*}
 so, $(\rho\times\omega)(\A\ltimes_\alpha G)\subseteq (\rho\times z)(\A\ltimes_\beta S)$. If $\Phi(\sum a_s\delta_s)\in \A\ltimes _\beta S$, then
\begin{equation*}
  \rho\times z(\Phi(\sum a_s\delta_s))=\sum \rho(a_s)z_s\in \rho\times\omega(\A\ltimes_\alpha G)
\end{equation*}
by \ref{eqn}.
\end{proof}
\begin{theorem}\label{theorem5.2}
Let $\alpha$ be a partial action of a unital inverse semigroup $G$ on a $C^*$-algebra $\A$ such that the representation $\pi\times u$ of $\A\ltimes G$ is faithful. Define an inverse semigroup $S_G$ by $S_G=\{(\alpha_{g_1}...\alpha_{g_n}, u_{g_1}...u_{g_n})\;:\;g_1,...,g_n\in G\}$ and an action $\beta$ of $S_G$ by $\beta_s=\alpha_{g_1}...\alpha_{g_n}$ for $s=(\alpha_{g_1}...\alpha_{g_n}, u_{g_1}...u_{g_n})$, as in Proposition \ref{pro3.3}. Then the crossed product $\A\ltimes_\alpha G$ and $\A\ltimes_\beta S$ are isomorphic.
\end{theorem}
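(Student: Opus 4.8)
The plan is to realise both crossed products as concrete $C^*$-algebras of operators on the Hilbert space $\h$ of the given covariant representation and to see that they coincide there. Fix the covariant representation $(\pi,u,\h)$ of $(\A,G,\alpha)$ for which $\pi\times u$ is faithful; then $\pi\times u$ is an isometric $*$-isomorphism of $\A\ltimes_\alpha G$ onto $M:=(\pi\times u)(\A\ltimes_\alpha G)\subseteq B(\h)$. By Proposition \ref{pro3.5}, $(\pi,u,\h)$ determines a covariant representation $(\pi,\nu,\h)$ of $(\A,S_G,\beta)$ with $\nu_s=u_{g_1}\cdots u_{g_n}$ whenever $s=(\alpha_{g_1}\cdots\alpha_{g_n},u_{g_1}\cdots u_{g_n})$, hence an integrated representation $\pi\times\nu$ of $\A\ltimes_\beta S_G$ on $\h$. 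Since the covariant representation of $(\A,G,\alpha)$ that Proposition \ref{pro3.5} attaches to $(\pi,\nu,\h)$ is again $(\pi,u,\h)$, Lemma \ref{lem5.1} applied with $\rho=\pi$, $z=\nu$, $\omega=u$ gives $(\pi\times\nu)(\A\ltimes_\beta S_G)=(\pi\times u)(\A\ltimes_\alpha G)=M$. Thus it remains only to prove that $\pi\times\nu$ is faithful; granting this, $(\pi\times u)^{-1}\circ(\pi\times\nu)$ is the desired $*$-isomorphism $\A\ltimes_\beta S_G\to\A\ltimes_\alpha G$.

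To prove faithfulness of $\pi\times\nu$ it suffices, since finitely supported functions are dense in $L$ and each integrated representation is norm-decreasing for the $\ell^1$-norm, to show that $\|\rho\times z(x)\|\le\|\pi\times\nu(x)\|$ for every covariant representation $(\rho,z,\K)$ of $(\A,S_G,\beta)$ and every finitely supported $x=\sum_s a_s\delta_s\in L$ with $a_s\in E_s$. I would transport $x$ to an element $y_x$ of the $G$-side algebra as follows. For each $s$ occurring in $x$, choose a factorisation $s=(\alpha_{g^s_1}\cdots\alpha_{g^s_{n_s}},u_{g^s_1}\cdots u_{g^s_{n_s}})$ and set $k_s:=g^s_1\cdots g^s_{n_s}\in G$. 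By Proposition \ref{pro3.3}, $a_s\in E_s=D_{g^s_1}D_{g^s_1g^s_2}\cdots D_{g^s_1\cdots g^s_{n_s}}\subseteq D_{k_s}$, so $y_x:=\sum_s a_s\delta_{k_s}$ lies in the algebra $L$ built from $(\A,G,\alpha)$. Let $(\rho,\omega,\K)$ be the covariant representation of $(\A,G,\alpha)$ associated to $(\rho,z,\K)$ by Proposition \ref{pro3.5}, so $\omega_g=z(\alpha_g,u_g)$. Multiplicativity of $z$ gives $z_s=\omega_{g^s_1}\cdots\omega_{g^s_{n_s}}$, and since $a_s\in D_{g^s_1}D_{g^s_1g^s_2}\cdots D_{g^s_1\cdots g^s_{n_s}}$, Corollary \ref{cor2.10} yields $\rho(a_s)z_s=\rho(a_s)\omega_{g^s_1}\cdots\omega_{g^s_{n_s}}=\rho(a_s)\omega_{k_s}$. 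Summing over $s$ gives $\rho\times z(x)=\rho\times\omega(y_x)$; running the identical computation with $(\rho,z,\K)$ replaced by $(\pi,\nu,\h)$, so that $\omega=u$, gives $\pi\times\nu(x)=\pi\times u(y_x)$ with the \emph{same} $y_x$.

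These two identities close the estimate. Since $\rho\times\omega$ is a $*$-representation of McClanahan's Banach $*$-algebra $L$ for $(\A,G,\alpha)$, we have $\|\rho\times\omega(y_x)\|\le\|y_x\|_{1,\alpha}$, where $\|\cdot\|_{1,\alpha}$ is the universal seminorm defining $\A\ltimes_\alpha G$; and because $\pi\times u$ is faithful, $\|y_x\|_{1,\alpha}=\|\pi\times u(y_x)\|$. Hence $\|\rho\times z(x)\|=\|\rho\times\omega(y_x)\|\le\|\pi\times u(y_x)\|=\|\pi\times\nu(x)\|$. Taking the supremum over all covariant representations $(\rho,z,\K)$ of $(\A,S_G,\beta)$ yields $\|x\|_1\le\|\pi\times\nu(x)\|$ for finitely supported $x$, hence for all $x\in L$ by density; as the reverse inequality always holds, $\pi\times\nu$ is isometric on $L/N$ and extends to a faithful representation of $\A\ltimes_\beta S_G$. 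Together with the surjectivity onto $M$ noted above, $\pi\times\nu$ is a $*$-isomorphism of $\A\ltimes_\beta S_G$ onto $M$, and composing with $(\pi\times u)^{-1}$ completes the proof.

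I expect the crux to be the faithfulness of $\pi\times\nu$, and within it the point that the integrated representation on the $S_G$-side crossed product collapses, uniformly in the representation, onto the integrated representation on the $G$-side crossed product through a single element $y_x$. The non-canonical choice of a factorisation of each $s\in S_G$ into generators is a minor annoyance but is harmless: Corollary \ref{cor2.10} shows that $\rho(a_s)\omega_{g^s_1}\cdots\omega_{g^s_{n_s}}$ depends only on $s$ and $a_s$, not on the chosen factorisation, so any fixed choice works.
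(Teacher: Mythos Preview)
Your proposal is correct and follows essentially the same route as the paper: both arguments reduce to showing that $\pi\times\nu$ is faithful (with surjectivity onto $M=(\pi\times u)(\A\ltimes_\alpha G)$ supplied by Lemma~\ref{lem5.1}), and both extract faithfulness from the identity $\rho(a_s)z_s=\rho(a_s)\omega_{g_1\cdots g_n}$ for $a_s\in E_s$, which is Corollary~\ref{cor2.10}. The paper packages the conclusion as a factorisation $\rho\times z=\Theta\circ(\pi\times\nu)$ with $\Theta=(\rho\times\omega)\circ(\pi\times u)^{-1}$ for the universal representation $\rho\times z$ of $\A\ltimes_\beta S_G$, whereas your elementwise norm inequality via the auxiliary $y_x$ is exactly this factorisation unwound at the level of elements.
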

\begin{proof}
Let $\nu_s=u_{g_1}...u_{g_n})$ for $s=(\alpha_{g_1}...\alpha_{g_n}, u_{g_1}...u_{g_n})$. We know from Proposition \ref{pro3.5} $(\pi, \nu, \h)$ is a covariant representation of $(\A, S, \beta)$. If we show that $\pi\times\nu$ is a faithful representation of $\A\ltimes_\beta S$, then $(\pi\times\nu)^{-1}\circ\pi\times\nu$ is an isomorphism. Consider the universal representation of $\A\ltimes_\beta S$, which by proposition \ref{pro4.8} must be in the form $\rho\times z$ for some covariant representation $(\rho, z)$ of $(\A, S, \beta)$. By Proposition \ref{pro3.5} the definition $\omega_g= z_{(\alpha_g, u_g)}$ gives a covariant representation $(\rho, \omega, \K)$ of $(\A, G, \alpha)$ and we have $(\rho\times\omega)(\A\ltimes_\alpha G)= (\rho\times z)(\A\ltimes_\beta S)$ by Lemma \ref{lem5.1}. Put $\Theta(x)=(\rho\times\omega)(\pi\times u)^{-1}(x)$, thus, $\Theta\circ\pi\times u=\rho\times\omega$. We will show that $\Theta\circ(\pi\times\nu)=\rho\times z$. It suffices to check this on generators $a\delta_s$, where $s=(\alpha_{g_1}...\alpha_{g_n}, u_{g_1}...u_{g_n})$ and $a\in E_s=D_{g_1}...D_{g_1...g_n}$.
\begin{eqnarray*}
  \Theta((\pi\times\nu)(a\delta_s)) &=& \Theta(\pi(a)\nu_s) \\
   &=& (\rho\times\omega)(\pi\times u)^{-1}(\pi(a)\nu_s)\\
   &=& (\rho\times\omega)(\pi\times u)^{-1}(\pi(a)u_{g_1}...u_{g_n}) \\
   &=&  (\rho\times\omega)(\pi\times u)^{-1}(\pi(a)u_{g_1...g_n})\\
   &=& (\rho\times\omega)(a\delta_{g_1...g_n})\\
   &=&  \rho(a)\omega_{g_1...g_n}\\
   &=&  \rho(a)\omega_{g_1}...\omega_{g_n}\\
   &=& \rho(a)z_{(\alpha_{g_1},u_{g_1})}...z_{(\alpha_{g_n}, u_{g_n})}\\
   &=&\rho(a)z_{(\alpha_{g_1}...\alpha_{g_n},u_{g_1}... u_{g_n})}\\
   &=&\rho\times z (a\delta_s)
\end{eqnarray*}
where we have appealed to Corollary \ref{cor2.10} twice more.
\end{proof}

\end{document}